\begin{document}

\title{On the maximum fraction of edges covered by $t$ perfect
  matchings in a cubic bridgeless graph}

\author{L. Esperet \footnote{Laboratoire G-SCOP (Grenoble-INP, CNRS),
    Grenoble, France. Partially supported by the French \emph{Agence
      Nationale de la Recherche} under reference
    \textsc{anr-10-jcjc-0204-01}.} \and
  G. Mazzuoccolo \footnote{Laboratoire G-SCOP (Grenoble-INP, CNRS),
    Grenoble, France. Research supported by a fellowship from the
    European Project ``INdAM fellowships in mathematics and/or
    applications for experienced researchers cofunded by Marie Curie
    actions''. E-mail: {\tt mazzuoccolo@unimore.it}.}}

\maketitle

\newtheorem{theorem}{Theorem} \newtheorem{lemma}[theorem]{Lemma}
\newtheorem{definition}[theorem]{Definition}
\newtheorem{example}[theorem]{Example}
\newtheorem{corollary}[theorem]{Corollary}
\newtheorem{conjecture}[theorem]{Conjecture}
\newtheorem{observation}[theorem]{Observation}
\newtheorem{problem}[theorem]{Problem}

\newcommand{\Ha}{\mathring{H}}
\def\scc{\mathop{\mathrm{scc}}\nolimits}

\begin{abstract} \noindent 
A conjecture of Berge and Fulkerson (1971) states that every
cubic bridgeless graph contains 6 perfect matchings covering each edge
precisely twice, which easily implies that every cubic bridgeless graph
has three perfect matchings with empty intersection (this weaker statement was
conjectured by Fan and Raspaud in 1994). Let $m_t$ be the supremum of all
reals $\alpha\le 1$ such that for every cubic 
bridgeless graph $G$, there exist $t$ perfect matchings of $G$
covering a fraction of at least $\alpha$ of the edges of $G$. It is
known that the Berge-Fulkerson conjecture is equivalent to the statement
that $m_5=1$, and implies that $m_4=\tfrac{14}{15}$ and
$m_3=\tfrac45$. In the first part of this paper, we show 
that $m_4=\tfrac{14}{15}$ implies $m_3=\tfrac45$, and $m_3=\tfrac45$ implies the
Fan-Raspaud conjecture, strengthening a recent result of
Tang, Zhang, and Zhu. In the second part of the paper, we prove that for
any $2\le t \le 4$ and for any real
$\tau$ lying in some appropriate interval, deciding whether a fraction
of more than (resp. at least) $\tau$ of the edges of a given cubic
bridgeless graph can be covered by $t$ perfect matching is an
NP-complete problem. This resolves a conjecture of Tang, Zhang, and Zhu.
\end{abstract}

\noindent \textit{Keywords: Berge-Fulkerson conjecture, perfect
  matchings, cubic graphs.\\
 MSC(2010):05C15 (05C70)}

\section{Introduction}\label{sec:intro}

Most of the notation used in this paper is standard (see
\cite{BoMu} or~\cite{Diestel} for any undefined terminology).  A
\emph{perfect matching} of a graph $G$ is a spanning subgraph of $G$
in which each vertex has degree precisely one. In this paper we will
only deal with \emph{cubic bridgeless} graphs, that is graphs in which
each vertex has degree 3 and such that each component is
2-edge-connected. We are interested in the following conjecture of
Berge and Fulkerson~\cite{Ful}.

\begin{conjecture}[Berge-Fulkerson, 1971]\label{Berge-Fulkerson}
If $G$ is a cubic bridgeless graph, then there exist six perfect matchings
of $G$ such that each edge of $G$ belongs to exactly two of them.  
\end{conjecture}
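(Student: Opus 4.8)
Since the statement above is the Berge--Fulkerson conjecture itself, a central open problem, what I can offer is the standard line of attack together with the point at which it stalls, rather than an actual proof. The plan is to argue with a hypothetical minimal counterexample $G$ (fewest vertices) and first reduce its connectivity. By hypothesis $G$ has no bridge; if $G$ had a $2$-edge-cut $\{e,f\}$ one splits $G$ along the cut into two smaller cubic bridgeless graphs, applies minimality to each, and patches the two systems of six perfect matchings using that a perfect matching meets a $2$-edge-cut in $0$ or $2$ edges, so the two sides glue along a consistent boundary pattern. A $3$-edge-cut $\{e,f,g\}$ is handled identically, using that a perfect matching meets it in $1$ or $3$ edges and that, over six matchings doubly covering the edges, each of $e,f,g$ is used exactly twice. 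Hence one may assume $G$ is cyclically $4$-edge-connected; and if $G$ were moreover $3$-edge-colourable the three colour classes, each taken twice, already furnish the six required matchings, so $G$ must be a snark.

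The genuinely hard part --- and the reason the conjecture remains open --- is the snark case, and here I would weigh three routes. (i) Fix one perfect matching $M$; then $G-M$ is a $2$-factor, and producing the remaining five matchings amounts to finding a suitably structured family of edge cuts, equivalently a nowhere-zero $\mathbb{Z}_2\times\mathbb{Z}_2$-flow with controlled support --- but snarks are precisely the graphs built to obstruct such objects, so no uniform construction is known. (ii) Invoke the Petersen-colouring conjecture (every cubic bridgeless graph admits an edge-colouring by the edges of the Petersen graph sending adjacent edges to adjacent edges), which pulls back the six perfect matchings of the Petersen graph and hence implies Berge--Fulkerson at once; but it is strictly stronger and equally open. (iii) Use the reformulation, due to Mazzuoccolo, replacing ``each edge covered exactly twice by six matchings'' with ``all edges covered by five matchings'', so that the delicate counting disappears and one faces a pure covering problem on a snark.

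Route (iii) looks the most tractable, so I would attack it through the fractional picture: for any cubic bridgeless graph the all-$\tfrac13$ vector lies in the perfect matching polytope (Edmonds' description, using that every odd edge-cut has size at least $3$), which is exactly what makes the trivial bounds behind the quantities $m_t$ work, and one would hope to round a fractional cover of weight close to $1$ into an integral cover by five matchings using the structure of a cyclically $4$-edge-connected cubic graph. The main obstacle is precisely this passage from the fractional statement, which is easy, to any integral statement strong enough for an arbitrary snark: every known technique --- Kempe chains, flow-type obstructions, case analysis of the known infinite families of snarks --- handles only restricted classes, and the connectivity reduction above does not by itself give enough leverage on a general snark.
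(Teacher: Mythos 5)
This item is a \emph{conjecture}, not a theorem: the paper records the Berge--Fulkerson conjecture as an open problem (and builds its results around various consequences and equivalents of it, e.g.\ $m_5=1$ and the implications in Figures~\ref{fig:implicationsA} and~\ref{fig:implicationsB}), but it offers no proof of it, because none exists. There is therefore nothing in the paper against which to compare a proof attempt, and you are right not to pretend otherwise.

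What you do supply --- a survey of the standard reductions and the point at which every known technique stalls --- is essentially accurate. A few small refinements. In the $3$-edge-cut reduction you say ``a perfect matching meets it in $1$ or $3$ edges''; that is true for a single matching, but in a putative six-matching double cover a parity count forces \emph{each} of the six matchings to meet the cut in exactly one edge (six odd numbers, each at least $1$, summing to $6$), and it is this stronger fact that makes the splicing of the two sides go through. You should also be a little careful that the pieces produced by contracting one side of a nontrivial $3$-cut remain cubic, bridgeless, and strictly smaller (i.e.\ the cut should be cyclic), which is what licenses the conclusion ``one may assume $G$ is cyclically $4$-edge-connected.'' Your remarks on route (ii) (Petersen colouring $\Rightarrow$ Berge--Fulkerson), on Mazzuoccolo's reformulation via five covering matchings (the paper's $m_5=1$, reference~\cite{Maz}), and on the all-$\tfrac13$ point of the perfect matching polytope are all correct; so is your honest assessment that none of this closes the gap on a general snark. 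In short: there is of course a ``gap'' in the sense that no proof is produced, but the task as posed has no proof to produce, and you identified that correctly.
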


This conjecture is equivalent to the existence of five perfect
matchings of $G$ such that any three of them have empty
intersection. Therefore, a weaker statement is the following
conjecture of Fan and Raspaud~\cite{FanRas}.

\begin{conjecture}[Fan-Raspaud, 1994]\label{Fan-Raspaud}
If $G$ is a cubic bridgeless graph, then there exist three perfect
matchings $M_1$, $M_2$ and $M_3$ of $G$ such that $M_1 \cap M_2 \cap
M_3 = \emptyset$.  
\end{conjecture}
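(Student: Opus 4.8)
\medskip
\noindent\textbf{Proof proposal.}
The plan is to prove the Fan--Raspaud conjecture by first reducing it to a small, highly structured family of graphs and then attacking that family with a fractional relaxation sharpened by structural arguments. \emph{Step 1 (reduction to cyclically $4$-edge-connected snarks).} If $G$ is $3$-edge-colorable, its three colour classes are pairwise disjoint perfect matchings and we are done, so assume $G$ is a snark. One then checks that the class of cubic bridgeless graphs with the Fan--Raspaud property is closed under the standard reductions at $2$-edge-cuts and at nontrivial $3$-edge-cuts: contracting one side of the cut produces smaller cubic bridgeless graphs, every perfect matching of a graph in which one side of a $3$-cut has been contracted uses exactly one of the three cut edges, and a triple of perfect matchings on the two sides that agree on which cut edges they use glues back to a triple of $G$ whose common intersection is still empty. (One must also show that such ``agreeing'' triples can always be chosen; this is where the bookkeeping lies.) After removing short cycles by local surgery of the same kind, we may therefore assume $G$ is a cyclically $4$-edge-connected snark of girth at least $5$.

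\emph{Step 2 (a fractional certificate).} For such $G$ the constant vector $\tfrac13\mathbf 1$ lies in the perfect matching polytope: it satisfies every degree constraint $x(\delta(v))=1$, and every odd-cut constraint $x(\delta(S))\ge 1$, because $|\delta(S)|$ is odd and, by bridgelessness, at least $3$, so $x(\delta(S))=|\delta(S)|/3\ge 1$. By Edmonds' theorem this exhibits a probability distribution $\mathcal D$ on the perfect matchings of $G$ with $\Pr_{M\sim\mathcal D}[e\in M]=\tfrac13$ for every edge $e$. It now suffices to round $\mathcal D$ to three perfect matchings $M_1,M_2,M_3$ with $M_1\cap M_2\cap M_3=\emptyset$; equivalently, since a perfect matching disjoint from $M_1\cap M_2$ is precisely a perfect matching of $G$ avoiding the matching $M_1\cap M_2$, it suffices to produce two perfect matchings $M_1,M_2$ such that $G\setminus(M_1\cap M_2)$ still has a perfect matching.

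\emph{Step 3 (rounding).} Drawing $M_1,M_2,M_3$ independently from $\mathcal D$ puts each edge into all three with probability $\tfrac1{27}$, so a union bound finishes the case $|E(G)|<27$. For larger $G$ I would try to replace $\mathcal D$ by a distribution whose correlations have bounded range --- using the cyclic $4$-edge-connectivity to ``localize'' the dependence between far-apart edges --- so that a Lov\'asz-Local-Lemma argument applies to the bad events $\{e\in M_1\cap M_2\cap M_3\}$; in parallel, a discharging argument aiming at an unavoidable set of \emph{reducible configurations} (small subgraphs whose presence lets one pass to a strictly smaller snark while lifting a valid triple) would reduce the problem to finitely many base cases, each checked directly.

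\emph{The main obstacle} is Step 3. Perfect-matching distributions on a cubic graph are generically long-range correlated, so there is no a priori bound on the dependency range, and the naive rounding fails exactly once $|E(G)|\ge 27$, i.e.\ in the only interesting regime; phrased via Step 2, the crux is to control how small and how ``spread out'' $M_1\cap M_2$ can be made uniformly over all snarks, and this is precisely what is not understood --- which is why the Fan--Raspaud conjecture, like the stronger Berge--Fulkerson conjecture, remains open. I expect essentially all of the effort to go into either forcing the local-lemma localization, or assembling a finite unavoidable family of reducible configurations, or else --- the only route currently within reach --- deducing the statement from the equality $m_3=\tfrac45$, and hence ultimately from Berge--Fulkerson.
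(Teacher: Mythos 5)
The statement you were asked to prove is the Fan--Raspaud Conjecture, which the paper does \emph{not} prove --- it is still open. The paper's contribution regarding this statement is purely conditional: Theorem~\ref{the:m3_FR} shows that $m_3=\tfrac45$ implies Fan--Raspaud (and in fact the stronger property that the three matchings can be chosen to additionally cover a $\tfrac45$ fraction of the edges), and Theorem~\ref{the:141545} shows $m_4=\tfrac{14}{15}\Rightarrow m_3=\tfrac45$. The mechanism behind Theorem~\ref{the:m3_FR} is a glueing construction: glue $|E(G)|$ copies of the Petersen graph on each edge of $G$, apply the hypothesis $m_3=\tfrac45$ to the resulting graph, and use the Petersen-graph counting Lemma~\ref{lem:petersen} to force the restriction to $G$ to be a Fan--Raspaud triple.

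Your proposal, to your credit, does not pretend to be a proof: you explicitly flag the fatal gap in Step~3. The first two steps are sound as far as they go. The reduction to cyclically $4$-edge-connected snarks is the standard closure argument, and placing $\tfrac13\mathbf 1$ in the perfect matching polytope via Edmonds is correct, as is the observation that independent sampling from the resulting distribution $\mathcal D$ plus a union bound handles $|E(G)|<27$. But Step~3 is exactly where the problem is open: the marginals $\Pr[e\in M]=\tfrac13$ tell you nothing about how to decorrelate the events $\{e\in M_1\cap M_2\cap M_3\}$ once $|E(G)|\ge 27$, and no Lov\'asz-Local-Lemma localization for perfect-matching distributions on cubic graphs is known --- correlations can be long-range, so the dependency digraph has no useful bounded structure. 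Likewise, no finite unavoidable set of ``Fan--Raspaud reducible'' configurations is known. In short, your proposal correctly identifies the obstruction, but it is a research program, not a proof, and it does not overlap with the paper's actual (conditional, glueing-based) contribution except in the closing remark that one could try to deduce Fan--Raspaud from $m_3=\tfrac45$ --- which is precisely the implication the paper establishes.

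One small technical caution on Step~1: closure of the Fan--Raspaud property under $2$- and nontrivial $3$-edge-cut reductions does require the ``agreeing triples'' bookkeeping you allude to, and for $3$-cuts this is not automatic --- one must argue that the distribution of which cut edge each $M_i$ uses can be matched on both sides, which typically needs a case analysis or a parity/counting argument. This is folklore but should not be waved away if the reduction is to carry real weight.
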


In this paper we will also consider the following middle step between these two
conjectures. 

\begin{conjecture}\label{4PMs}
If $G$ is a cubic bridgeless graph, then there exist four perfect
matchings such that any three of them have empty intersection.  
\end{conjecture}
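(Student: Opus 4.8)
The plan is to separate the routine part of this problem from its real difficulty. First, if $G$ admits a proper $3$-edge-colouring with colour classes $C_1,C_2,C_3$, then these are pairwise disjoint perfect matchings, and the four matchings $C_1,C_2,C_3,C_1$ already work: the intersection of any three of them contains $C_i\cap C_j$ for some $i\neq j$, hence is empty. So we may assume $G$ is not $3$-edge-colourable, and in particular that $G$ is connected. Next I would reduce to the cyclically $4$-edge-connected case. A $2$-edge-cut $\{e,f\}$ of a cubic bridgeless graph is crossed by every perfect matching in $0$ or $2$ edges, so replacing, on one side of the cut, the two cut-stubs by a single edge between their endpoints yields a smaller cubic bridgeless graph; two four-matching families, one per side, are then glued by pairing the matchings that do, respectively do not, use that new edge. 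A nontrivial $3$-edge-cut $T$ is crossed in $1$ or $3$ edges by every perfect matching and is handled by a standard but more delicate reduction --- contracting (a suitable gadget version of) each side and gluing the two families along $T$ --- which typically forces one to carry a strengthened inductive statement prescribing the behaviour of the four matchings near $T$; the hypothesis that any three of the four matchings have empty intersection is precisely what keeps the bookkeeping feasible. One can push the same scheme through $4$-edge-cuts as well, further reducing the problem to cyclically $5$-edge-connected graphs.

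For the remaining case I would build the four matchings around a single carefully chosen $2$-factor $F$, taking $M_1=E(G)\setminus F$, the complementary perfect matching. Each even cycle of $F$ carries exactly two perfect matchings, which partition its edges into two alternating sets, and the idea is to choose $M_2,M_3,M_4$ so that, on each even cycle of $F$, they use these two alternating sets in a balanced way, keeping every edge of an even cycle in at most two of the four matchings. The genuine obstacle is the odd cycles of $F$: on an odd cycle $C$ no perfect matching of $G$ restricts to an alternating set, so each of $M_2,M_3,M_4$ matches an odd number of vertices of $C$ using edges outside $F$, and one must choose on each odd cycle three near-alternating patterns whose pairwise overlaps are small enough that no edge of $C$ ends up in three or four of the matchings. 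To gain leverage here it is natural to invoke Edmonds' perfect matching polytope theorem: in a cubic bridgeless graph the uniform weighting $\tfrac13\mathbf 1$ is a convex combination of perfect matchings, giving a probability distribution on perfect matchings under which every edge is used with probability exactly $\tfrac13$. Sampling $M_2,M_3,M_4$ from this distribution makes the expected number of the four matchings through a fixed edge equal to $\tfrac43$, and one would then hope to derandomise, or to run a local-correction argument on the odd cycles, so as to eliminate the rare edges covered three or four times.

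This last step is where I expect the real resistance to lie: controlling the odd cycles of $F$ amounts to controlling the oddness of $G$, and Conjecture~\ref{4PMs} sits strictly between the Berge--Fulkerson conjecture (equivalently, the existence of five perfect matchings any three of which have empty intersection) and the Fan--Raspaud conjecture, which it implies by simply discarding the fourth matching, and the latter is still open in general. I would therefore not expect a complete proof, and would instead aim to establish Conjecture~\ref{4PMs} for families in which the odd-cycle bookkeeping can be carried out explicitly: cubic graphs of small oddness (for several of which the Berge--Fulkerson conjecture, hence Conjecture~\ref{4PMs}, is already known), cubic graphs with a $2$-factor whose odd cycles are few in number or of bounded length, and structured snark families. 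For each such partial result one would then record its consequences for the parameters $m_3$ and $m_4$ considered in the rest of the paper.
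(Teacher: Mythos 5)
The statement you were asked about is a \emph{conjecture}, not a theorem: the paper does not prove Conjecture~\ref{4PMs}, it only situates it in a web of implications (Berge--Fulkerson $\Rightarrow$ Conjecture~\ref{4PMs} $\Rightarrow$ Fan--Raspaud, and $m_4=\tfrac{14}{15}\Rightarrow$ Conjecture~\ref{4PMs} via Theorem~\ref{the:m4_4PMs}). You correctly recognized this --- your last paragraph explicitly says that a complete proof should not be expected and that the conjecture sits strictly between two famous open problems --- so in that sense your assessment matches the paper's: there is no proof to reproduce.

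On the content of your sketch, a few remarks. The $3$-edge-colourable case is right in conclusion, but you wrote that the intersection of any three of $C_1,C_2,C_3,C_1$ \emph{contains} $C_i\cap C_j$; it should be \emph{is contained in} $C_i\cap C_j$, which is what makes it empty. The $2$-edge-cut reduction is standard and sound, and reducing along nontrivial $3$- and $4$-edge-cuts is indeed the usual first step in this area; you are right that one has to carry a strengthened inductive hypothesis prescribing the local behaviour of the four matchings across the cut, and that part is nontrivial but plausible. Where the sketch genuinely stops being a proof is the cyclically $5$-edge-connected core: the $2$-factor/Edmonds-polytope randomisation gives only expectation bounds, and no known derandomisation or local correction controls the odd cycles well enough to guarantee \emph{every} edge lies in at most two of $M_2,M_3,M_4$ together with $M_1=E(G)\setminus F$. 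Since even Fan--Raspaud (the weaker statement with three matchings) is open, this gap is not a repairable detail but the whole difficulty. Your fallback --- proving the statement for small oddness or for structured snark families and recording the consequences for $m_3$ and $m_4$ --- is a reasonable research plan, but it is not what this paper does either: the paper's contribution around Conjecture~\ref{4PMs} is the implication $m_4=\tfrac{14}{15}\Rightarrow$ Conjecture~\ref{4PMs}, obtained by glueing Petersen graphs on edges and counting, a quite different (and much shorter) argument than anything in your sketch.
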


Following the notation introduced in \cite{KaiKraNor} we define
$m_t(G)$ as the maximum fraction of edges in $G$ that can be covered
by $t$ perfect matchings and we define $m_t$ as the infimum of
$m_t(G)$ over all cubic bridgeless graphs $G$.

The second author~\cite{Maz} proved that the Berge-Fulkerson conjecture is
equivalent to 
the conjecture that the edge-set of every cubic bridgeless graph can be
covered by 5 perfect matchings, i.e. $m_5=1$. Kaiser,
Kr\'al', and Norine~\cite{KaiKraNor} proved that the infimum $m_2$ is
a minimum, attained by the Petersen graph (i.e. $m_2=\tfrac35$), and
Patel~\cite{Pat} 
conjectured that the values of $m_3$ and $m_4$ are also attained
by the Petersen graph. In other words:

\begin{conjecture}[Patel, 2006]\label{Pat3}
$m_3=\tfrac45$.
\end{conjecture}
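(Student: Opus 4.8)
The plan is to prove $m_3=\tfrac45$ by establishing the two bounds $m_3\le\tfrac45$ and $m_3\ge\tfrac45$ separately, the first being elementary and the second being the real content.

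For $m_3\le\tfrac45$ it suffices to compute $m_3(P)$ for the Petersen graph $P$. I would use three facts: $P$ has exactly six perfect matchings; since $P$ is edge-transitive and $6\cdot 5=2\cdot 15$, every edge of $P$ lies in exactly two of them; and hence any two of the six perfect matchings meet in exactly one edge. For the last point, the symmetric difference of two perfect matchings having no common edge would be a spanning $2$-regular subgraph of $P$ on $10$ vertices whose cycles alternate between the two matchings --- hence a Hamilton cycle or the union of a $4$-cycle and a $6$-cycle --- and $P$ contains neither; so each pairwise intersection is nonempty, and since $\sum_{i<j}|M_i\cap M_j|=\sum_e\binom{2}{2}=15=\binom{6}{2}$ their average is $1$, forcing each to equal $1$. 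Then for any three perfect matchings $M_1,M_2,M_3$ of $P$ no edge lies in all three (indeed no edge lies in three of the six), so inclusion--exclusion gives $|M_1\cup M_2\cup M_3|=3\cdot 5-3\cdot 1+0=12$; hence $m_3(P)=\tfrac{12}{15}=\tfrac45$ and $m_3\le\tfrac45$.

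For $m_3\ge\tfrac45$ the natural route is to deduce it from $m_4=\tfrac{14}{15}$, which in turn follows from the Berge--Fulkerson conjecture: given a Berge--Fulkerson cover $M_1,\dots,M_6$ of a cubic bridgeless graph $G$, discard the pair $M_i,M_j$ with $|M_i\cap M_j|$ smallest; since $\sum_{k<l}|M_k\cap M_l|=\sum_e\binom{2}{2}=|E|$ over the $\binom{6}{2}=15$ pairs, this minimum is at most $\tfrac{1}{15}|E|$, and an edge missed by the four retained matchings must lie in both discarded ones, so those four cover at least $\tfrac{14}{15}|E|$ edges. Granting $m_4=\tfrac{14}{15}$, fix $G$ with $|E|=\tfrac32 n$ edges, take four perfect matchings covering at least $\tfrac{14}{15}|E|$ of them, and let $n_j$ be the number of edges lying in exactly $j$ of the four. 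Then $\sum_j n_j=|E|$, $\sum_j j\,n_j=4\cdot\tfrac n2=\tfrac43|E|$, and $n_0\le\tfrac1{15}|E|$; discarding the matching with the fewest \emph{private} edges (those it alone covers) newly uncovers at most $\tfrac14 n_1$ edges, so some three of the four cover at least $|E|-n_0-\tfrac14 n_1$ edges, and one wants this to be at least $\tfrac{12}{15}|E|$.

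The main obstacle is exactly that last inequality. A priori $n_1$ can be large --- concentrating all surplus coverage on edges used four times pushes $n_1$ up to roughly $\tfrac89|E|$ --- and over all distributions of the $n_j$ compatible with $n_0\le\tfrac1{15}|E|$ and $\sum_j j\,n_j=\tfrac43|E|$ the quantity $|E|-n_0-\tfrac14 n_1$ drops as low as $\tfrac{11}{15}|E|$, short of the target $\tfrac{12}{15}|E|$. To close the gap one must produce four perfect matchings with extra control over their private edges, which the bare inequality $m_4\ge\tfrac{14}{15}$ does not provide as a black box; the approach I would take is to apply that inequality not to $G$ itself but to an auxiliary cubic bridgeless graph obtained from $G$ by a local gadget substitution engineered to force the required overlap structure, and then pull the resulting matchings back to $G$ --- this amplification is where I expect the genuine difficulty to sit. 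Alternatively, for an unconditional argument one could start from the fact that every cubic bridgeless graph admits a fractional decomposition of its edge set into perfect matchings (a consequence of Edmonds' perfect matching polytope theorem) and try to choose three of the matchings occurring there, or round a fractional combination of them, so as to cover at least $\tfrac45$ of the edges; the selection/rounding step, not the Petersen computation, is the hard part, which is why $m_3=\tfrac45$ remains a conjecture.
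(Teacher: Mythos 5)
The statement you were asked to prove is labeled in the paper as a \emph{conjecture} (Patel's Conjecture~\ref{Pat3}), and the paper does not prove it; it remains open. So there is no paper proof to compare against, and your honest conclusion that ``$m_3=\tfrac45$ remains a conjecture'' is the correct one. Your upper-bound computation $m_3\le\tfrac45$ via the Petersen graph is right and reproduces the content of Lemma~\ref{lem:petersen} (six perfect matchings, each edge in exactly two, pairwise intersections of size one, any three covering $12$ of $15$ edges).

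For the lower bound you correctly identify the paper's own conditional route, namely that $m_4=\tfrac{14}{15}$ would imply $m_3=\tfrac45$ (Theorem~\ref{the:141545}), and you correctly diagnose why the naive counting argument from four best matchings of $G$ alone fails: without control on depth-$3$ and depth-$4$ edges the bound degrades to $\tfrac{11}{15}$. You also guess the right fix --- apply the hypothesis to an auxiliary cubic bridgeless graph built by gadget substitution and pull the matchings back. That is exactly Theorem~\ref{the:m4_4PMs} of the paper: glue $|E(G)|$ copies of the Petersen graph on every edge of $G$; since four \emph{distinct} perfect matchings of the Petersen graph cover $14$ of its $15$ edges while any collection with at most three distinct ones covers at most $12$, demanding coverage at least $\tfrac{14}{15}$ in the blown-up graph forces the restrictions to $G$ to have edge-depth at most $2$, i.e.\ $n_3=n_4=0$. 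With $\varepsilon_1+2\varepsilon_2=\tfrac43$ and $\varepsilon_1+\varepsilon_2\ge\tfrac{14}{15}$ one then gets $\tfrac34\,\varepsilon_1+\varepsilon_2\ge\tfrac45$ after discarding the matching with fewest depth-$1$ edges, which closes precisely the gap you located. What neither you nor the paper can do is prove $m_4=\tfrac{14}{15}$ unconditionally --- that is Conjecture~\ref{Pat4}, itself a consequence of Berge--Fulkerson --- and this, rather than the Petersen computation or the amplification trick, is the genuinely missing ingredient; your assessment of where the difficulty sits is accurate.
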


\begin{conjecture}[Patel, 2006]\label{Pat4}
$m_4=\tfrac{14}{15}$.
\end{conjecture}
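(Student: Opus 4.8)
The plan is to prove the two inequalities $m_4\le\tfrac{14}{15}$ and $m_4\ge\tfrac{14}{15}$ separately. The first is a finite check on the Petersen graph. The second is the substantive part: I would first derive it conditionally on the Berge--Fulkerson conjecture (Conjecture~\ref{Berge-Fulkerson}) by a short double-counting argument, and then set up the natural route to an unconditional proof and flag where it breaks down. It is worth saying at the outset that the unconditional lower bound is the crux, and that it seems to require controlling how the perfect matchings in a fractional Berge--Fulkerson cover overlap on a non-$3$-edge-colorable graph --- a problem of essentially the same difficulty as Berge--Fulkerson itself.

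\emph{Upper bound.} The Petersen graph $P$ has $15$ edges and exactly six perfect matchings $M_1,\dots,M_6$, and this family is a Berge--Fulkerson cover: each edge lies in exactly two of them. Every $2$-factor of $P$ is a disjoint union of two $5$-cycles (since $P$ has girth $5$ and is not Hamiltonian), so $P$ has no two disjoint perfect matchings; thus $|M_i\cap M_j|\ge 1$ for every pair. On the other hand each edge contributes $\binom{2}{2}=1$ to exactly one term of $\sum_{i<j}|M_i\cap M_j|$, so this sum equals $15$, and as there are $\binom{6}{2}=15$ pairs we get $|M_i\cap M_j|=1$ for all $i<j$. Any four perfect matchings of $P$ arise by deleting some pair $\{M_i,M_j\}$ from the six, and an edge is uncovered by the remaining four exactly when it lies in both deleted matchings, i.e.\ in $M_i\cap M_j$; hence precisely $15-1=14$ edges are covered. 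Therefore $m_4(P)=\tfrac{14}{15}$ and $m_4\le\tfrac{14}{15}$.

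\emph{Lower bound, given Berge--Fulkerson.} Let $G$ be cubic and bridgeless with edge set $E$, and let $M_1,\dots,M_6$ be a Berge--Fulkerson cover. As above, each edge contributes $1$ to exactly one term, so
\[
\sum_{i<j}|M_i\cap M_j| \;=\; |E|.
\]
Averaging over the $15$ pairs yields a pair $\{M_i,M_j\}$ with $|M_i\cap M_j|\le |E|/15$. Take the four matchings indexed by $\{1,\dots,6\}\setminus\{i,j\}$. An edge lies in two of the six; it is covered (once or twice) by the four chosen ones unless both of its occurrences are among $M_i,M_j$, i.e.\ unless it lies in $M_i\cap M_j$. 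Hence these four matchings miss at most $|E|/15$ edges, so $m_4(G)\ge\tfrac{14}{15}$; taking the infimum over $G$ and combining with the upper bound gives $m_4=\tfrac{14}{15}$, which is the implication recalled in the abstract.

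\emph{Removing the hypothesis --- the main obstacle.} To make the lower bound unconditional I would first apply the usual reductions along small edge-cuts to reduce to the case where $G$ is cyclically $4$-edge-connected; moreover, if $G$ is $3$-edge-colorable its three colour classes already cover all of $E$, so only cyclically $4$-edge-connected, non-$3$-edge-colorable $G$ remain. For such $G$, Edmonds' perfect matching polytope theorem supplies the fractional cover one needs: since $G$ is cubic and bridgeless, $\tfrac13\mathbf{1}$ satisfies all the degree and odd-cut constraints (each vertex cut has value $\tfrac13\cdot 3=1$, and each odd edge-cut has at least $3$ edges, being odd and non-zero in a bridgeless graph), hence is a convex combination $\tfrac13\mathbf{1}=\sum_\ell\lambda_\ell\chi^{N_\ell}$ of perfect matchings with $\lambda_\ell\ge 0$, $\sum_\ell\lambda_\ell=1$, so the $N_\ell$ with multiplicities $6\lambda_\ell$ form a fractional Berge--Fulkerson cover. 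What remains --- and what I do not see how to do --- is to round this down to four genuine perfect matchings covering a $\tfrac{14}{15}$ fraction of $E$. This sparsification is the main obstacle: it is in the spirit of the argument giving $m_2=\tfrac35$, but handling four matchings requires controlling the joint overlap structure of matchings sampled from the fractional cover on the non-$3$-edge-colorable core, and an unconditional handle on this seems no easier than Berge--Fulkerson; this is why one is driven instead to conditional reductions between the $m_t$ and to hardness results for the associated decision problems.
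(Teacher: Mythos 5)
The statement you were asked to prove is Conjecture~\ref{Pat4}, and the paper does \emph{not} prove it: it is stated as an open conjecture of Patel (2006), and the whole point of Section~\ref{sec:impl} is to establish implications between such conjectures (e.g.\ Theorem~\ref{the:m4_4PMs} shows Conjecture~\ref{Pat4} implies Conjecture~\ref{4PMs}, and Theorem~\ref{the:141545} shows it implies Conjecture~\ref{Pat3}), not to resolve them. So there is no proof in the paper for you to match. Your honest recognition that the unconditional lower bound is the open content of the conjecture is exactly right, and it is the appropriate answer here.

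What you \emph{do} prove is correct. The upper bound $m_4\le\tfrac{14}{15}$ via the Petersen graph is sound and consistent with Lemma~\ref{lem:petersen} (four perfect matchings of $P$ cover exactly $14$ edges); your route to ``every pair of perfect matchings of $P$ meets in exactly one edge'' via the sum $\sum_{i<j}|M_i\cap M_j|=15$ over $\binom{6}{2}=15$ pairs with each term $\ge 1$ is fine, though the non-existence of two disjoint perfect matchings in $P$ is usually seen more quickly from the fact that two disjoint perfect matchings in a cubic graph leave a third as complement, contradicting $P$ not being $3$-edge-colorable. The conditional lower bound is a correct reconstruction of Patel's averaging argument: from a Berge--Fulkerson cover, $\sum_{i<j}|M_i\cap M_j|=|E|$, so some pair overlaps in at most $|E|/15$ edges, and dropping that pair leaves four matchings missing at most $|E|/15$ edges. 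This is precisely the implication the paper attributes to~\cite{Pat}. Your discussion of why the Edmonds-polytope / fractional-cover route does not round down to four genuine matchings is a fair assessment of why the problem is hard; indeed the paper's NP-completeness result (Theorem~\ref{th:m4}) for deciding $m_4(G)>\tau$ with $\tau\in(\tfrac{14}{15},1)$ is further evidence that no easy structural handle is available.
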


Patel proved~\cite{Pat} that the Berge-Fulkerson conjecture implies
Conjectures~\ref{Pat3} and~\ref{Pat4}. These implications
are summed up in Figure~\ref{fig:implicationsA}.

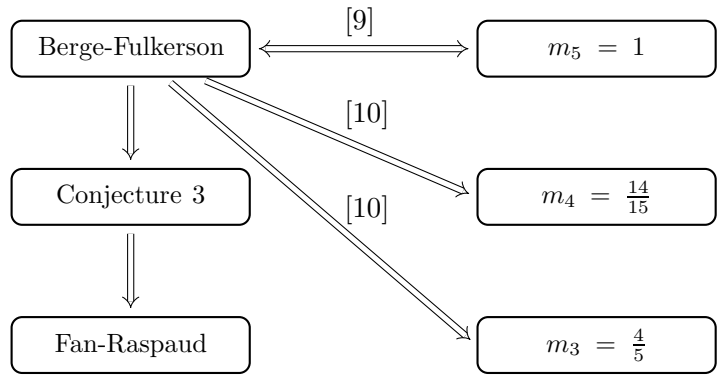
\begin{figure}[ht]
\centering
\begin{tikzpicture}[node distance=1.2cm, auto,
implies/.style={double,double equal sign distance,shorten <=3pt,
           shorten >=3pt,-implies},boite/.style={
           rectangle,
           rounded corners,
           draw=black, thick,
           text width=7em,
           minimum height=1.8em,
           text centered,execute at begin node=\footnotesize}]
 \node[boite] (berge) {Berge-Fulkerson};;
 \node[boite, below=of berge]
 (conj3) {Conjecture~\ref{4PMs}};
\node[boite, below=of conj3]
 (fan) {Fan-Raspaud} ;
\node[boite, right=3cm of berge] (m5) {$m_5=1$};
\node[boite, right=3cm of conj3] (m4) {$m_4=\tfrac{14}{15}$};
\node[boite, right=3cm of fan] (m3) {$m_3=\tfrac45$};
\node[right=1.1cm of berge,yshift=10pt] (c1) {\small \cite{Maz}};
\node[right=1.1cm of conj3,yshift=31pt] (c2) {\small \cite{Pat}};
\node[right=1.1cm of conj3,yshift=-5pt] (c3) {\small \cite{Pat}};

\draw [implies] (berge)--(conj3);
\draw [implies,implies-implies] (berge)--(m5);
\draw [implies] (berge)--(m4.west);
\draw [implies] (berge)--(m3.west);
\draw [implies] (conj3)--(fan);  

\end{tikzpicture}

\caption{A diagram of the implications presented in Section
  \ref{sec:intro}}\label{fig:implicationsA}
\end{figure}

Tang, Zhang, and Zhu~\cite{TanZhaZhu} recently showed that weighted
versions of Conjectures~\ref{Pat3} and~\ref{Pat4} imply the
Fan-Raspaud conjecture: for instance, they prove that if for any cubic
bridgeless graph $G$ and any edge-weighting of $G$, a weighted version
of $m_3(G)$ is at least $\tfrac{4}{5}$, then the Fan-Raspaud
conjecture holds. In Section~\ref{sec:impl}, we show that it is enough to consider only the
edge-weighting where every edge has weight 1. More precisely, we show
that $m_4=\tfrac{14}{15}$ implies Conjecture~\ref{4PMs} and
$m_3=\tfrac{4}{5}$ implies the Fan-Raspaud conjecture. We also show
that $m_4=\tfrac{14}{15}$ implies $m_3=\tfrac{4}{5}$.

Tang, Zhang, and Zhu~\cite{TanZhaZhu} conjectured that
for any real $\tfrac45<\tau \le 1$, determining whether a cubic bridgeless
graph $G$ satisfies $m_3(G)\ge \tau$ is an NP-complete problem. In
Section~\ref{sec:np} we prove this conjecture together with
similar statements for $m_2(G)$ and $m_4(G)$.

\section{Main results}\label{sec:impl}

Given two cubic graphs $G$ and $H$ and two edges $xy$ in $G$ and $uv$
in $H$, the \emph{glueing}, or \emph{2-cut-connection}, of $(G,x,y)$ and $(H,u,v)$ is the
graph obtained from $G$ and $H$ by removing edges $xy$ and $uv$, and
connecting $x$ and $u$ by an edge, and $y$ and $v$ by an edge.  We
call these two new edges the \emph{clone edges of $xy$ or $uv$} in the
resulting graph.  Note that if $G$ and $H$ are cubic and bridgeless, then
the resulting graph is also cubic and bridgeless. In the present paper
$H$ will always be $K_4$ or the Petersen graph, which are both
arc-transitive (for any two pairs of adjacent vertices $u_1,u_2$ and
$v_1,v_2$, there is an automorphism that maps $u_1$ to $v_1$ and $u_2$
to $v_2$). In this case the choice of $uv$ and the order of 
each pair $(x,y)$ and $(u,v)$ are not relevant, so we simply say that
\emph{we glue $H$ on the edge $xy$ of $G$}. 

In what follows, we will need to glue several graphs on each edge of a
given graph $G$. This has to be understood as follows: given copies
$H_1,\ldots,H_k$ ($k\ge 2$) of $K_4$ or the Petersen graph, \emph{glueing
  $H_1,\ldots,H_k$ on the edge $e$ of $G$} means glueing $H_k$ on some
clone edge of $e$ in the glueing of $H_1,\ldots,H_{k-1}$ on the
edge $e$ of $G$ (see Figure~\ref{fig:example}).


\begin{figure}[h]
\centering
\includegraphics[width=8cm]{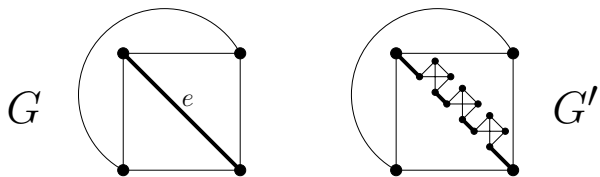}
\caption{The graph $G'$ is obtained by glueing 3 copies of $K_4$ on the
  edge $e$ of $G$. The clone
  edges of $e$ in $G'$ are drawn as thick edges.}\label{fig:example}
\end{figure}

Note that each perfect matching in the graph $G'$ resulting from the
glueing of $H_1,\ldots,H_k$ ($k\ge 1$) on some edge $e$ of $G$ either
contains all clone edges of $e$, or none of them (since each pair of
such edges forms a 2-edge-cut in $G'$, and the intersection of every
perfect matching with an edge-cut has the same parity as the
edge-cut). It follows that each perfect matching $M'$ of $G'$
corresponds to a perfect matching $M$ of $G$ and perfect matchings
$M_i$ of $H_i$, for each $1\le i\le k$. We call each of these perfect
matchings the \emph{restriction} of $M$ to $G,H_1,\ldots,H_k$,
respectively.




\medskip

The following is a well-known property of the Petersen graph.


 

\begin{lemma}\label{lem:petersen}
The Petersen graph $P$ has exactly 6 distinct perfect matchings. Moreover the
following properties hold:
\begin{itemize}
\item Every edge of $P$ is covered by exactly two distinct perfect matchings;

 \item Every two distinct perfect matchings of $P$ intersect in exactly one edge (and therefore cover exactly 9 edges);
 
 \item Every three distinct perfect matchings of $P$ cover exactly 12 edges;
 \item Every four distinct perfect matchings of $P$ cover exactly 14 edges.
\end{itemize}
\end{lemma}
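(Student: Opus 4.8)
The plan is to argue via the standard drawing of $P$: an outer $5$-cycle $v_0v_1v_2v_3v_4$, an inner pentagram on $u_0,u_1,u_2,u_3,u_4$ in which $u_i$ is joined to $u_{i+2}$ (indices modulo $5$), and the five spokes $v_iu_i$. First I would note that every perfect matching of $P$ contains at least one spoke, since otherwise its restriction to the outer cycle would be a perfect matching of $C_5$, which cannot exist. Then I would determine the perfect matchings through a fixed spoke, say $v_0u_0$: after deleting $v_0$ and $u_0$ one has to match the remaining eight vertices, and branching on whether $v_1$ is matched to $v_2$ or to $u_1$ shows that in each branch the other four edges are forced — in the first branch to $u_2u_4$, $v_3v_4$, $u_1u_3$, giving $M_0:=\{v_0u_0,v_1v_2,v_3v_4,u_1u_3,u_2u_4\}$, and in the second to $v_3u_3$, $v_2u_2$, $v_4u_4$, giving the all-spokes matching $S:=\{v_iu_i:0\le i\le 4\}$. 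Since the rotation $v_i\mapsto v_{i+1}$, $u_i\mapsto u_{i+1}$ is an automorphism of $P$ of order $5$, and every perfect matching contains some spoke $v_iu_i$ and is therefore the image of $M_0$ or of $S$ under the $i$-th power of this rotation, the perfect matchings of $P$ are precisely $S$ together with the five rotations $M_0,M_1,M_2,M_3,M_4$ of $M_0$. These six are pairwise distinct ($S$ contains five spokes, each $M_i$ exactly one), so $P$ has exactly six perfect matchings; and reading off the list — or observing that $P$ is edge-transitive, so the number $\lambda$ of perfect matchings through an edge is the same for every edge and $5\cdot 6=15\lambda$ forces $\lambda=2$ — gives the first bullet.

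For the second bullet, let $M$ and $M'$ be distinct perfect matchings. Then $M\triangle M'$ is a vertex-disjoint union of even cycles of $P$, hence (girth $5$) of cycles of length at least $6$; it cannot contain two of them (two such cycles have at least $12>10$ edges) and it cannot be a single $10$-cycle (that would be a Hamilton cycle, but $P$ is not Hamiltonian), so $|M\triangle M'|\in\{6,8\}$ and $|M\cap M'|=\tfrac12\bigl(10-|M\triangle M'|\bigr)\in\{1,2\}$. Summing over the $\binom{6}{2}=15$ unordered pairs and using the first bullet, $\sum_{\{M,M'\}}|M\cap M'|=\sum_{e}\binom{2}{2}=15$, so, each term being at least $1$, every term equals $1$. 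Hence any two perfect matchings share exactly one edge and cover $5+5-1=9$ edges. (Alternatively one may simply compute the $15$ pairwise intersections from the explicit list, which by rotational symmetry reduces to the pairs $(S,M_0)$, $(M_0,M_1)$, $(M_0,M_2)$, avoiding any appeal to non-Hamiltonicity.)

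The remaining two bullets follow formally. For three distinct perfect matchings, no edge lies in all three (each edge lies in exactly two), so inclusion–exclusion gives $3\cdot 5-3\cdot 1+0=12$ covered edges. For four distinct perfect matchings, let $M,M'$ be the other two; an edge is uncovered by the four precisely when both perfect matchings containing it lie in $\{M,M'\}$, i.e.\ precisely when it lies in $M\cap M'$, so exactly $|M\cap M'|=1$ edge is uncovered and $15-1=14$ edges are covered. The one genuinely computational point is the short case analysis pinning down the two perfect matchings through a fixed spoke (and hence the count of six); everything else is bookkeeping, and the sole external fact used — the classical non-Hamiltonicity of $P$ — can be sidestepped as above.
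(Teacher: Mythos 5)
The paper itself offers no proof of this lemma---it is stated as ``a well-known property of the Petersen graph''---so there is no argument in the text to compare yours against; I am therefore evaluating your proof on its own. Your proof is correct and self-contained. The enumeration of the six perfect matchings is right: every perfect matching must use a spoke (else it would induce a perfect matching of the outer $C_5$), the two matchings through a fixed spoke $v_0u_0$ are indeed forced to be $M_0$ and the all-spokes matching $S$ (after $v_1v_2$, the vertex $u_2$ is forced to $u_4$, then $u_1$ to $u_3$, then $v_3$ to $v_4$; after $v_1u_1$, the vertices $u_3$, $v_4$, $u_2$ are forced to spokes in turn), and the order-$5$ rotation together with the spoke observation shows these and their rotates are all of them. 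The double-count $\sum_e \lambda_e = 6\cdot 5 = 15\lambda$ for the first bullet, the symmetric-difference/girth/double-counting argument for the second, and the inclusion--exclusion and ``complementary pair'' arguments for the last two bullets are all valid. You are also right that ruling out $|M\triangle M'|=10$ (i.e.\ non-Hamiltonicity of $P$) is genuinely needed to get the lower bound $|M\cap M'|\ge 1$ before averaging, and your fallback of checking the three representative pairs $(S,M_0)$, $(M_0,M_1)$, $(M_0,M_2)$ is a correct and complete reduction under the rotation group, since $\sigma^2$ sends $\{M_0,M_3\}$ to $\{M_0,M_2\}$ and $\sigma$ sends $\{M_0,M_4\}$ to $\{M_0,M_1\}$.
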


It was proved in~\cite{TanZhaZhu} that a fractional version of
Conjecture \ref{Pat3} ($m_3=\tfrac45$) implies the Fan-Raspaud
conjecture. We strengthen this result by showing that not only the
integral conjecture itself implies the Fan-Raspaud conjecture, but
it implies an even stronger statement (which will be needed in what
follows).

\begin{theorem}\label{the:m3_FR}
  Conjecture \ref{Pat3} implies that any cubic bridgeless graph $G$
  has three perfect matchings
  $M_1,M_2,M_3$ such that $M_1\cap M_2 \cap M_3=\emptyset$ and $|M_1 \cup
  M_2 \cup M_3| \ge \tfrac45\, |E(G)|$. In
  particular, Conjecture \ref{Pat3} implies the Fan-Raspaud Conjecture.
\end{theorem}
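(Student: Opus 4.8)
The plan is to prove the contrapositive-flavored statement directly: assume Conjecture~\ref{Pat3} holds, and from it produce, for an arbitrary cubic bridgeless graph $G$, three perfect matchings with empty triple intersection covering at least $\tfrac45|E(G)|$ edges. The key idea is a \emph{gadget amplification}: we build an auxiliary cubic bridgeless graph $G^\star$ from $G$ by glueing several copies of the Petersen graph (or $K_4$) on each edge of $G$, apply Conjecture~\ref{Pat3} to $G^\star$ to get three matchings covering a $\tfrac45$-fraction of $E(G^\star)$, and then \emph{push this information back} to $G$ via the restriction operation described before Lemma~\ref{lem:petersen}. The point of glueing many gadgets on each edge is to amplify the edge count so that the global $\tfrac45$ bound on $G^\star$, combined with the local behaviour of perfect matchings on Petersen gadgets (Lemma~\ref{lem:petersen}), forces the restriction to $G$ itself to cover nearly a $\tfrac45$-fraction; taking the number of gadgets to infinity and using that $G$ has only finitely many triples of perfect matchings (so the set of achievable coverage values is finite) yields the exact bound $\tfrac45$.

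First I would fix the construction precisely: let $N$ be a large integer, and let $G^\star = G^\star_N$ be obtained by glueing $N$ copies of the Petersen graph on each edge $e$ of $G$. Since $G$ and the Petersen graph are cubic bridgeless, so is $G^\star$, and $|E(G^\star)| = |E(G)| + N|E(G)|\cdot 14 = (1+14N)|E(G)|$ (each gadget contributes its $15$ edges but $2$ per gadget are "absorbed" into clone edges — I'd double-check this bookkeeping, which is the kind of routine count that must be gotten exactly right). By Conjecture~\ref{Pat3} applied to $G^\star$, there are three perfect matchings $M_1^\star, M_2^\star, M_3^\star$ of $G^\star$ covering at least $\tfrac45|E(G^\star)|$ edges. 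Their restrictions give three perfect matchings $M_1,M_2,M_3$ of $G$ together with, on each Petersen copy, a triple of (not necessarily distinct) perfect matchings of $P$.

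Next I would analyze the coverage edge by edge. For an edge $e$ of $G$ on which $N$ Petersen copies are glued: by Lemma~\ref{lem:petersen}, three \emph{distinct} perfect matchings of $P$ cover exactly $12$ of its $15$ edges, two distinct ones cover $9$, and a single one covers $5$; and the clone edges of $e$ are in $M_i^\star$ iff the restriction $M_i$ contains $e$. So the number of gadget-edges covered on the copies glued on $e$ depends only on the pattern of which of $e \in M_1, e\in M_2, e\in M_3$ hold, plus, for each copy, how the three restricted matchings of $P$ relate. The crucial sub-step is the case $e \in M_1 \cap M_2 \cap M_3$: then all three clone edges of $e$ are used, and on \emph{every} Petersen copy glued on $e$ the three restricted matchings must agree with this — but one checks (this is where the Petersen structure does the work) that the coverage on such a copy cannot be improved to compensate, so an edge of $G$ lying in the triple intersection is "expensive", costing a bounded deficit on each of its $N$ gadgets. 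Summing: if $k$ denotes the number of edges of $G$ in $M_1\cap M_2\cap M_3$, the total coverage in $G^\star$ is at most $\big(\text{coverage in } G\big) + 14N|E(G)| - c\,kN$ for some positive constant $c$; comparing with the lower bound $\tfrac45(1+14N)|E(G)|$ and letting $N\to\infty$ forces $k=0$, i.e.\ $M_1\cap M_2\cap M_3=\emptyset$, and simultaneously forces $|M_1\cup M_2\cup M_3|\ge \tfrac45|E(G)|$ (here I use that the possible values of $|M_1\cup M_2\cup M_3|$ over all triples of perfect matchings of the fixed graph $G$ form a finite set, so an asymptotic inequality upgrades to an exact one). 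The final sentence of the theorem is then immediate since a triple with empty intersection is exactly a Fan--Raspaud triple.

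The main obstacle I anticipate is the case analysis on a single Petersen gadget: I need to verify that whenever the three restricted matchings on a copy glued on $e$ are \emph{forced} to be equal (the $e\in M_1\cap M_2\cap M_3$ case) they cover at most $5$ of the gadget's edges, that in the "two equal, one different" case they cover at most $9$, and genuinely $12$ only when all three differ — and, symmetrically, that these are the \emph{only} constraints, so that a matching triple on $G$ with empty triple intersection can be \emph{extended} to $G^\star$ realizing coverage arbitrarily close to what the counting allows. In other words I must show both an upper bound (every extension is at most this good, giving $k=0$) and a matching lower bound (some extension achieves coverage $\ge \tfrac45|E(G^\star)|$ whenever $|M_1\cup M_2\cup M_3|\ge\tfrac45|E(G)|$ on $G$, which is what makes the reduction tight rather than lossy). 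Lemma~\ref{lem:petersen} supplies exactly the arithmetic needed, but threading it through the glueing—restriction correspondence cleanly, and choosing whether to glue Petersen graphs or $K_4$'s (or a mix) to make the constant $c$ work out, is the delicate part of the argument.
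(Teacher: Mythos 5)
Your approach is essentially the paper's: glue many Petersen copies on each edge of $G$ to form $G^\star$, apply Conjecture~\ref{Pat3} to $G^\star$, restrict the three resulting matchings to $G$, and use the upper bounds in Lemma~\ref{lem:petersen} (at most $12$ edges covered per copy in general, at most $9$ per copy glued on a triple-intersection edge, since the three restricted matchings then share the clone edge and so at most two of them are distinct) to conclude both $M_1\cap M_2\cap M_3=\emptyset$ and $|M_1\cup M_2\cup M_3|\ge\tfrac45|E(G)|$. The paper simply fixes $N=|E(G)|$ rather than taking a limit; a single sufficiently large $N$ works, and there is no need for the finiteness/limiting argument you sketch.

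Two corrections. First, the edge count: glueing one Petersen copy on an edge \emph{adds} $15$ edges (remove two, add two, plus the $15$ new ones minus the one lost in the gadget gives a net $+15$), so $|E(G^\star_N)|=(1+15N)|E(G)|$, not $(1+14N)|E(G)|$; you flagged this, and it does not affect the conclusion. Second, and more importantly, the ``matching lower bound'' you anticipate in your last paragraph is not needed. The three perfect matchings of $G^\star$ are handed to you by Conjecture~\ref{Pat3} applied to $G^\star$; you never need to extend a good triple on $G$ to a good triple on $G^\star$, nor to tune the gadget (Petersen vs.\ $K_4$) so that some constant ``works out''. Only the upper-bound direction of the gadget analysis is used, so the case analysis you were worried about collapses to the two inequalities from Lemma~\ref{lem:petersen} already cited.
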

\begin{proof}
Let $G$ be a cubic bridgeless graph, and let $G'$ be the graph
obtained by glueing $|E(G)|$ copies of the 
Petersen graph on each edge of $G$ (see Figure
\ref{fig:glueingP}). The number of edges of $G'$ can be easily
computed as $15|E(G)|^2+|E(G)|$. If Conjecture \ref{Pat3} holds, then
$G'$ has a set ${\cal M}'=\{M'_1,M'_2,M'_3\}$ of perfect matchings
such that $M'_1\cup M'_2 \cup M'_3$ contains at least
$\tfrac45\,|E(G')|= 12|E(G)|^2+\tfrac45\,|E(G)|$ edges of
$G'$. Let ${\cal M}=\{M_1,M_2,M_3\}$ be the restriction of ${\cal M}'$
to $G$. We now prove that $|M_1 \cup
  M_2 \cup M_3| \ge \tfrac45\, |E(G)|$ and $M_1\cap M_2 \cap M_3=\emptyset$.

\begin{figure}[ht]
\centering
\includegraphics[width=9cm]{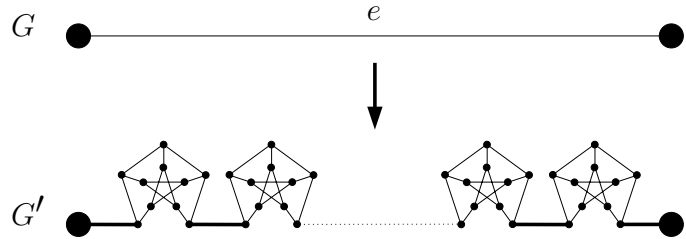}
\caption{$G'$ is obtained by glueing multiple copies of the Petersen
  graph on each edge of $G$. The thick edges are the clone edges
  of $e$ in $G'$.}\label{fig:glueingP}
\end{figure}

Note that by Lemma~\ref{lem:petersen}, at most 12 edges are covered
by the restriction of ${\cal M}'$ to each of the $|E(G)|^2$ copies of
the Petersen graph glued on the edges of $G$. Since $M'_1\cup M'_2 \cup M'_3$
contains at least $12|E(G)|^2+\tfrac45\,|E(G)|$ edges of
$G'$, it follows that $|M_1 \cup
  M_2 \cup M_3| \ge \tfrac45\, |E(G)|$. 

By Lemma~\ref{lem:petersen}, for each edge $e$ of
$G$ lying in 
$M_1\cap M_2\cap M_3$, at most 9 edges are covered
by the restriction of ${\cal M}'$ to each of the $|E(G)|$ copies of
the Petersen graph glued on $e$. If $M_1\cap M_2\cap
  M_3\neq \emptyset$, this implies that $M'_1\cup M'_2 \cup M'_3$
  contains at most  
$12 |E(G)|(|E(G)|-1)+9|E(G)|+|E(G)| = 12 |E(G)|^2
 - 2 |E(G)| $ edges, a contradiction. 
\end{proof}

Let $\mathcal M$ be a set of perfect matchings of a graph $G$.  For
$e\in E(G)$, we define the \emph{depth} of $e$ in $\mathcal M$, denoted by
$dp_e({\cal M})$, as the number of perfect matchings of $\cal M$
containing $e$.  The \emph{edge-depth} of $\cal M$, denoted by
$dp({\cal M})$, is the maximum depth of an edge of $G$ in $\cal M$.
The same proof as that of Theorem~\ref{the:m3_FR}, considering four
perfect matchings instead of three, shows the following connection
between Conjecture \ref{Pat4} ($m_4=\tfrac{14}{15}$) and Conjecture
\ref{4PMs}.

\begin{theorem}\label{the:m4_4PMs}
  Conjecture \ref{Pat4} implies that any cubic bridgeless graph $G$
  has a set ${\cal M}=\{M_1,M_2,M_3,M_4\}$ of four perfect matchings
  such that $dp({\cal M})\le 2$  and $|M_1 \cup
  M_2 \cup M_3 \cup M_4| \ge \tfrac{14}{15}\, |E(G)|$. In
  particular, Conjecture \ref{Pat4} implies Conjecture \ref{4PMs}.
\end{theorem}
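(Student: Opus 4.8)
The plan is to mimic the proof of Theorem~\ref{the:m3_FR} almost verbatim, replacing three perfect matchings by four and the bound $\tfrac45$ by $\tfrac{14}{15}$, while additionally tracking the edge-depth. As before, given a cubic bridgeless graph $G$, I would let $G'$ be the graph obtained by glueing $|E(G)|$ copies of the Petersen graph on each edge of $G$, so that $G'$ has $15|E(G)|^2+|E(G)|$ edges and there are $|E(G)|^2$ glued copies of $P$ in total. Assuming Conjecture~\ref{Pat4}, $G'$ has a set ${\cal M}'=\{M_1',M_2',M_3',M_4'\}$ of four perfect matchings covering at least $\tfrac{14}{15}|E(G')| = 14|E(G)|^2+\tfrac{14}{15}|E(G)|$ edges; let ${\cal M}=\{M_1,M_2,M_3,M_4\}$ be its restriction to $G$.

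The first half of the argument is the covering bound. By the last item of Lemma~\ref{lem:petersen}, the restriction of ${\cal M}'$ to each of the $|E(G)|^2$ glued copies of $P$ covers at most $14$ of that copy's $15$ edges, accounting for at most $14|E(G)|^2$ covered edges among the glued copies; hence at least $\tfrac{14}{15}|E(G)|$ of the clone-related edges outside the copies—equivalently, the edges corresponding to $E(G)$—must be covered, giving $|M_1\cup M_2\cup M_3\cup M_4|\ge \tfrac{14}{15}|E(G)|$.

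The second half is the edge-depth bound $dp({\cal M})\le 2$, replacing the ``empty intersection'' conclusion of Theorem~\ref{the:m3_FR}; this is the step I expect to require the most care, since Lemma~\ref{lem:petersen} as stated only speaks about \emph{distinct} perfect matchings of $P$, whereas the restrictions $M_1',\dots,M_4'$ to a single glued copy of $P$ need not be distinct. If some edge $e$ of $G$ lies in three of the $M_i$ (depth $\ge 3$), then in each of the $|E(G)|$ copies of $P$ glued on $e$, the corresponding three restricted perfect matchings all contain the clone edge of $e$ inside that copy; I would argue that three perfect matchings of $P$ through a common edge cover at most $12$ edges of $P$ (this follows from the third item of Lemma~\ref{lem:petersen} if they are distinct, and is immediate—at most $9$—if two coincide, so in all cases at most $12$). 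Then the total number of edges of $G'$ covered by ${\cal M}'$ is at most $14|E(G)|(|E(G)|-1) + 12|E(G)| + |E(G)| = 14|E(G)|^2 - |E(G)|$, which is strictly less than $14|E(G)|^2+\tfrac{14}{15}|E(G)|$, a contradiction. Hence $dp({\cal M})\le 2$, and in particular ${\cal M}$ certifies Conjecture~\ref{4PMs} for $G$.

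The only genuine obstacle, then, is handling the non-distinct case in Lemma~\ref{lem:petersen}; since repeated perfect matchings only reduce the number of covered edges, every inequality in Lemma~\ref{lem:petersen} that bounds a cover from above remains valid for multisets, so the argument goes through without modification. I would state this observation once (perhaps as a parenthetical remark), and then the rest is the routine arithmetic displayed above.
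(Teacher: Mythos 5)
Your overall approach is correct and is precisely the one the paper intends: the paper simply says that the same gluing argument as in Theorem~\ref{the:m3_FR}, run with four perfect matchings instead of three, gives the result. Your covering bound ($|M_1\cup M_2\cup M_3\cup M_4|\ge\tfrac{14}{15}|E(G)|$) and your remark that the upper bounds in Lemma~\ref{lem:petersen} remain valid for multisets of perfect matchings are both right.

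There is, however, a small logical gap in the depth argument. You show that the \emph{three} restricted perfect matchings passing through the clone edge $uv_j$ of a copy $P_j$ cover at most $12$ (in fact at most $9$, since three perfect matchings of $P$ through a common edge can include at most two distinct ones, as each edge of $P$ lies in exactly two perfect matchings, so the case ``distinct'' in your parenthetical is vacuous). But in the subsequent count $14|E(G)|(|E(G)|-1)+12|E(G)|+|E(G)|$, the term $12|E(G)|$ is used as a bound for the edges of $P_j$ covered by the union of \emph{all four} restricted perfect matchings, not merely the three sharing $uv_j$. This needs one more sentence: since the three restrictions containing $uv_j$ are at most two distinct perfect matchings, the set of all four restrictions contains at most three distinct perfect matchings, and by the third item of Lemma~\ref{lem:petersen} these cover at most $12$ edges of $P_j$. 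With that observation added, the bound $12$ per copy glued on $e$ is justified, and the rest of your arithmetic ($14|E(G)|^2-|E(G)|<14|E(G)|^2+\tfrac{14}{15}|E(G)|$, contradiction) is correct.
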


We now use Theorem~\ref{the:m4_4PMs} to show that $m_4=\tfrac{14}{15}$
implies $m_3=\tfrac45$.

\begin{theorem}\label{the:141545}
Conjecture \ref{Pat4} implies Conjecture \ref{Pat3}.
\end{theorem}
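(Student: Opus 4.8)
The plan is to treat the two inequalities $m_3\le\tfrac45$ and $m_3\ge\tfrac45$ separately. The first is unconditional: by Lemma~\ref{lem:petersen}, three perfect matchings of the Petersen graph $P$ can cover at most $12$ of its $15$ edges, so $m_3(P)=\tfrac{12}{15}=\tfrac45$, whence $m_3\le\tfrac45$. The real task is to show that Conjecture~\ref{Pat4} forces $m_3(G)\ge\tfrac45$ for every cubic bridgeless graph $G$. Rather than use $m_4=\tfrac{14}{15}$ directly, I would invoke the stronger consequence already isolated in Theorem~\ref{the:m4_4PMs}: $G$ admits four perfect matchings $M_1,M_2,M_3,M_4$ with $dp(\mathcal{M})\le 2$ and $|M_1\cup M_2\cup M_3\cup M_4|\ge\tfrac{14}{15}|E(G)|$. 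The idea is then to delete whichever of these four matchings is "least useful'', while keeping the loss to the union small.

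To make this precise I would set $n=|E(G)|$ and, for $\mathcal{M}=\{M_1,M_2,M_3,M_4\}$, let $c$ be the number of edges of $G$ covered by no $M_i$, so that $c\le\tfrac1{15}n$, and for each $i$ let $x_i$ be the number of edges lying in $M_i$ and in no other $M_j$. The key observation is that the union of the three matchings obtained by deleting $M_i$ has exactly $n-c-x_i$ edges, since deleting $M_i$ uncovers precisely its private edges. Next I would double count using $dp(\mathcal{M})\le 2$: every edge has depth $0$, $1$, or $2$ and $\sum_i|M_i|=\tfrac{4n}{3}$, so writing $a$, $b$ for the numbers of edges of depth $1$, $2$, the relations $a+b+c=n$ and $a+2b=\tfrac{4n}{3}$ give $a=\tfrac{2n}{3}-2c$. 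Since $\sum_i x_i=a$, some $i$ has $x_i\le\tfrac14 a=\tfrac n6-\tfrac c2$, and for that $i$ the remaining three matchings cover at least $n-c-(\tfrac n6-\tfrac c2)=\tfrac{5n}{6}-\tfrac c2\ge\tfrac{5n}{6}-\tfrac{n}{30}=\tfrac45 n$ edges, using $c\le\tfrac1{15}n$. Thus $m_3(G)\ge\tfrac45$ for every $G$, and together with the Petersen bound this yields $m_3=\tfrac45$.

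The single point requiring care, and the reason the argument must pass through Theorem~\ref{the:m4_4PMs} instead of the bare statement $m_4=\tfrac{14}{15}$, is the depth bound $dp(\mathcal{M})\le 2$: an edge of depth $3$ would be covered redundantly by three of the matchings, and the identity $a=\tfrac{2n}{3}-2c$ — exactly what makes the averaging step tight — would fail. So I expect the substantive content to lie entirely in Theorem~\ref{the:m4_4PMs}; what is left here is the short extremal computation above, plus the routine remarks that $m_3$ is an infimum and that the Petersen graph supplies the matching upper bound.
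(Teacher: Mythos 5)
Your proposal is correct and follows essentially the same approach as the paper: both invoke Theorem~\ref{the:m4_4PMs} to obtain four perfect matchings with edge-depth at most $2$ covering a $\tfrac{14}{15}$ fraction, then remove the matching with the fewest private (depth-one) edges and use the depth bound together with an averaging argument to conclude. The only cosmetic differences are that you work with edge counts ($a$, $b$, $c$, $x_i$) rather than the paper's fractional quantities $\varepsilon_i$, and you explicitly note the matching upper bound $m_3\le\tfrac45$ from the Petersen graph, which the paper leaves implicit.
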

\begin{proof}
Assume that Conjecture \ref{Pat4} holds and let $G$ be any cubic
bridgeless graph. By Theorem~\ref{the:m4_4PMs}, $G$ has a set ${\cal
  M}=\{M_1,M_2,M_3,M_4\}$ of four perfect matchings such that
$dp({\cal M})\le 2$ and $|M_1 \cup M_2 \cup M_3 \cup M_4| \ge
\tfrac{14}{15}\, |E(G)|$. For $i\ge 0$, let $\varepsilon_i$ be the
fraction of edges of $G$ covered precisely $i$ times. Consider the set
$E_1$ of edges covered exactly once by $\cal M$, and remove from $\cal
M$ the perfect matching, say $M_1$, containing the smallest number of
edges of $E_1$. Then the fraction of edges of $G$ covered by $M_2\cup
M_3 \cup M_4$ is at least $\frac{3}{4}\varepsilon_1 + \varepsilon_2$.

Since $dp({\cal
  M})\le 2$ and every perfect matching contains a third of the edges,
we have $\varepsilon_1+2\varepsilon_2=\tfrac43$. Combining this with
the assumption that $\varepsilon_1+\varepsilon_2\ge \tfrac{14}{15}$,
we obtain $\tfrac34\,\varepsilon_1+\varepsilon_2\ge \tfrac45$, which
concludes the proof.
\end{proof}

\begin{figure}[ht]
\centering
\begin{tikzpicture}[node distance=1.2cm, auto,
implies/.style={double,double equal sign distance,shorten <=3pt,
           shorten >=3pt,-implies},boite/.style={
           rectangle,
           rounded corners,
           draw=black, thick,
           text width=7em,
           minimum height=1.8em,
           text centered,execute at begin node=\footnotesize}]
 \node[boite] (berge) {Berge-Fulkerson};;
 \node[boite, below=of berge]
 (conj3) {Conjecture~\ref{4PMs}};
\node[boite, below=of conj3]
 (fan) {Fan-Raspaud} ;
\node[boite, right=3cm of berge] (m5) {$m_5=1$};
\node[boite, right=3cm of conj3] (m4) {$m_4=\tfrac{14}{15}$};
\node[boite, right=3cm of fan] (m3) {$m_3=\tfrac45$};
\node[right=0.6cm of conj3,yshift=10pt] (c1) {\footnotesize Theorem~\ref{the:m4_4PMs}};
\node[right=0.6cm of fan,yshift=10pt] (c2) {\footnotesize Theorem~\ref{the:m3_FR}};
\node[below=0.3cm of m4,xshift=30pt] (c3) {\footnotesize Theorem~\ref{the:141545}};

\draw [implies] (berge)--(conj3);
\draw [implies,implies-implies] (berge)--(m5);
\draw [implies] (m5)--(m4);
\draw [implies,very thick] (m4)--(m3);
\draw [implies] (conj3)--(fan); 
 \draw [implies,very thick] (m4)--(conj3);
\draw [implies,very thick] (m3)--(fan);

\end{tikzpicture}

\caption{New diagram of implications after
  Section~\ref{sec:impl}}\label{fig:implicationsB}

\end{figure}
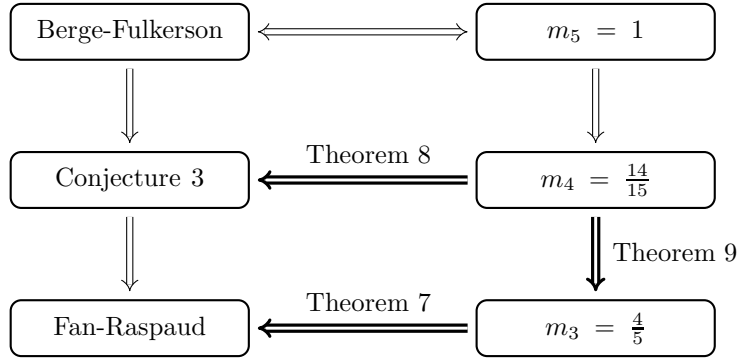

We summarize in Figure~\ref{fig:implicationsB} the implications which
follow from the 
results of this section. An important remark is that in order to prove
that a given cubic 
bridgeless graph $H$ satisfies $m_3(H)\ge \tfrac45$, in
Theorem~\ref{the:141545} we really use the assumption that $m_4(G)\ge
\tfrac{14}{15}$ \emph{for every cubic bridgeless graph $G$}. We do not
know how to prove the stronger statement that if
$m_4(G)=\tfrac{14}{15}$, then $m_3(G)\ge \tfrac45$. In the remainder
of this section, we provide weaker bounds relating $m_k(G)$
and $m_{k+1}(G)$, for any $k\ge 2$.

\begin{theorem}
If $G$ is a cubic bridgeless graph, then $m_2(G)\ge \tfrac12
\,m_3(G)+\tfrac16$.
\end{theorem}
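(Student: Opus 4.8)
The plan is a double-counting (averaging) argument over the three pairs formed by an optimal triple of perfect matchings. Let $M_1,M_2,M_3$ be three perfect matchings of $G$ with $|M_1\cup M_2\cup M_3| = m_3(G)\,|E(G)|$, and for $i\in\{1,2,3\}$ let $\varepsilon_i$ denote the fraction of edges of $G$ covered exactly $i$ times by $\{M_1,M_2,M_3\}$. Then $\varepsilon_1+\varepsilon_2+\varepsilon_3 = m_3(G)$. Moreover, since $G$ is cubic, each perfect matching covers exactly $\tfrac13|E(G)|$ edges, so counting incidences between edges and matchings gives the identity $\varepsilon_1+2\varepsilon_2+3\varepsilon_3 = 1$. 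These are the only two facts about $\varepsilon_1,\varepsilon_2,\varepsilon_3$ that I will use, besides $\varepsilon_i\ge 0$.

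Next I would estimate $\sum_{1\le i<j\le 3} |M_i\cup M_j|$ by looking at the contribution of a single edge $e$. If $e$ is covered exactly once, the unique matching containing it lies in exactly two of the three pairs, so $e$ contributes $2$. If $e$ is covered twice or three times, then every pair $\{M_i,M_j\}$ meets the set of matchings containing $e$ (there are only three matchings in total), so $e$ contributes $3$ to the sum. Hence
\begin{equation*}
\sum_{1\le i<j\le 3} |M_i\cup M_j| \;=\; \bigl(2\varepsilon_1+3\varepsilon_2+3\varepsilon_3\bigr)\,|E(G)| ,
\end{equation*}
and averaging over the three pairs yields a pair, say $\{M_1,M_2\}$, with $|M_1\cup M_2|\ge \bigl(\tfrac23\varepsilon_1+\varepsilon_2+\varepsilon_3\bigr)|E(G)|$. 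Therefore $m_2(G)\ge \tfrac23\varepsilon_1+\varepsilon_2+\varepsilon_3$.

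It remains to check that $\tfrac23\varepsilon_1+\varepsilon_2+\varepsilon_3 \ge \tfrac12\,m_3(G)+\tfrac16 = \tfrac12(\varepsilon_1+\varepsilon_2+\varepsilon_3)+\tfrac16$. Rearranging, this is equivalent to $\tfrac16\varepsilon_1+\tfrac12\varepsilon_2+\tfrac12\varepsilon_3\ge\tfrac16$, i.e. to $\varepsilon_1+3\varepsilon_2+3\varepsilon_3\ge 1$; and using the identity $\varepsilon_1+2\varepsilon_2+3\varepsilon_3 = 1$ this reads $1+\varepsilon_2\ge 1$, which holds. This completes the argument. I do not expect a genuine obstacle here: the only point requiring care is recording the correct per-edge contributions in the pair-sum, and observing that the incidence identity $\varepsilon_1+2\varepsilon_2+3\varepsilon_3=1$ closes the inequality exactly, with slack precisely $\varepsilon_2$.
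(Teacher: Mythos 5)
Your proof is correct and essentially matches the paper's argument. Both use the same two identities $\varepsilon_1+\varepsilon_2+\varepsilon_3=m_3(G)$ and $\varepsilon_1+2\varepsilon_2+3\varepsilon_3=1$, and both ultimately establish the bound $m_2(G)\ge \tfrac23\varepsilon_1+\varepsilon_2+\varepsilon_3$; the paper phrases the pair-selection as ``remove the matching containing the fewest singly-covered edges'' while you phrase it as ``average $|M_i\cup M_j|$ over the three pairs,'' but these are the same step viewed from opposite sides, since $\sum_{i<j}|M_i\cup M_j| = 3m_3(G)|E(G)|-\varepsilon_1|E(G)|$.
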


\begin{proof}
Let $M_1,M_2,M_3$ be perfect matchings of $G$ covering a fraction of
$m_3(G)$ of the edges of $G$. For $i\ge 0$, let $\varepsilon_i$ be the
fraction of edges of $G$ covered precisely $i$ times by
$M_1,M_2,M_3$. We have $\mathbf{(1)}$
$\varepsilon_1+\varepsilon_2+\varepsilon_3=m_3(G)$ and $\mathbf{(2)}$
$\varepsilon_1+2\varepsilon_2+3\varepsilon_3=1$. The combination
$\tfrac12 \,\mathbf{(1)}+\tfrac16 \,\mathbf{(2)}$ gives
$\tfrac23\,\varepsilon_1+\tfrac56\,\varepsilon_2+\varepsilon_3=
\tfrac12\,m_3(G)+\tfrac16$. We remove from $M_1,M_2,M_3$ the perfect
matching, say $M_1$, containing the least number of edges covered
precisely once by $M_1,M_2,M_3$. It follows that the fraction of edges of
$G$ covered by $M_2,M_3$ is at least
$\tfrac23\,\varepsilon_1+\varepsilon_2+\varepsilon_3\ge
\tfrac12\,m_3(G)+\tfrac16$.
\end{proof}

\begin{theorem}\label{th:mk}
If $G$ is a cubic bridgeless graph and $k\ge 4$ is an integer, then
$m_{k-1}(G)\ge \tfrac{k-1}k \,m_k(G)+\tfrac1{3k}$.
\end{theorem}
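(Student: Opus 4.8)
The plan is to start from an optimal $k$-tuple of perfect matchings, to delete from it the matching carrying the fewest ``private'' edges, and to control the loss by a double-counting argument in which the hypothesis $k\ge 4$ is essential.

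First I would set up the bookkeeping. Let $\mathcal M=\{M_1,\dots,M_k\}$ be perfect matchings covering a fraction $m_k(G)$ of $E(G)$, and for $i\ge 0$ let $\varepsilon_i$ be the fraction of edges of $G$ lying in exactly $i$ of the $M_j$'s. Counting edge--matching incidences two ways (each matching has $\tfrac13|E(G)|$ edges) gives $\sum_{i\ge1}\varepsilon_i=m_k(G)$ and $\sum_{i\ge1}i\,\varepsilon_i=\tfrac k3$, hence $\sum_{i\ge2}(i-1)\varepsilon_i=\tfrac k3-m_k(G)$. Next, exactly as in the proof of $m_2(G)\ge\tfrac12 m_3(G)+\tfrac16$ above, for each $j$ let $x_j$ be the fraction of edges covered exactly once by $\mathcal M$ and lying in $M_j$; the corresponding edge-sets are pairwise disjoint with union the set of depth-one edges, so $\sum_j x_j=\varepsilon_1$ and $\min_j x_j\le\varepsilon_1/k$. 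Deleting from $\mathcal M$ a matching $M_{j^\star}$ attaining this minimum, every edge of depth $\ge 2$ stays covered and only the depth-one edges owned by $M_{j^\star}$ are lost, so
\[
 m_{k-1}(G)\ \ge\ \sum_{i\ge2}\varepsilon_i+\bigl(\varepsilon_1-x_{j^\star}\bigr)\ \ge\ \sum_{i\ge2}\varepsilon_i+\tfrac{k-1}{k}\varepsilon_1\ =\ m_k(G)-\tfrac{\varepsilon_1}{k}.
\]
It therefore suffices to show $\varepsilon_1\le m_k(G)-\tfrac13$, i.e.\ $\sum_{i\ge2}\varepsilon_i\ge\tfrac13$, since then $m_{k-1}(G)\ge m_k(G)-\tfrac1k\bigl(m_k(G)-\tfrac13\bigr)=\tfrac{k-1}{k}m_k(G)+\tfrac1{3k}$.

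The inequality $\sum_{i\ge2}\varepsilon_i\ge\tfrac13$ is the heart of the proof and the step I expect to be the main obstacle. It is false for $k=3$ (for the Petersen graph an optimal triple has $\varepsilon_1=\tfrac35$, $\varepsilon_2=\tfrac15$), which is precisely why the case $k=3$ only yields the weaker bound $m_2(G)\ge\tfrac12 m_3(G)+\tfrac16$; so any argument must exploit $k\ge 4$. The plan is to show that $m_k(G)$ is always attained by a $k$-tuple in which no edge lies in more than $k-2$ of the matchings (that is, $\varepsilon_i=0$ for $i\ge k-1$; for $k=4$ this just says the tuple has edge-depth $2$). Granting this, each index $i\ge 2$ with $\varepsilon_i>0$ satisfies $i-1\le k-3$, so $\sum_{i\ge2}(i-1)\varepsilon_i\le(k-3)\sum_{i\ge2}\varepsilon_i$; together with $\sum_{i\ge2}(i-1)\varepsilon_i=\tfrac k3-m_k(G)$ this gives $\sum_{i\ge2}\varepsilon_i\ge\bigl(\tfrac k3-m_k(G)\bigr)/(k-3)\ge\tfrac13$, the final inequality using only $m_k(G)\le 1$ and $k\ge 4$ (for $k=4$ one even has $\varepsilon_2=\tfrac43-m_4(G)\ge\tfrac13$ directly).

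To produce such a low-depth optimal tuple I would argue by a local exchange: starting from any optimal $\mathcal M$, as long as some edge $e$ has depth $\ge k-1$, re-route a (judiciously chosen) matching containing $e$ along an $M$-alternating cycle through $e$ all of whose matching-edges have depth $\ge 2$ in $\mathcal M$; then $e$ loses a unit of depth, no previously covered edge becomes uncovered, the coverage stays equal to $m_k(G)$, and a potential such as $\sum_e d_e^{\,2}$ (with $d_e$ the depth of $e$, and $\sum_e d_e=\tfrac k3|E(G)|$ fixed) strictly decreases, so the procedure terminates. Guaranteeing the existence of such an alternating cycle --- this is where the bridgelessness of $G$ and the choice of which matching through $e$ to modify really enter --- is the delicate part; that the constraint system forced by edge-depth $\le k-2$ and coverage $m_k(G)$ is at any rate feasible follows from the matching-polytope bound $m_t(G)\ge\tfrac23 m_{t-1}(G)+\tfrac13$, which gives $m_k(G)\ge 1-(2/3)^k$. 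Once the structural statement is in hand, the displayed computation finishes the proof.
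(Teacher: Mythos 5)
Your first half is correct and matches the paper's strategy: you parameterize by $\varepsilon_i$, delete the matching carrying the fewest private (depth-one) edges, and reduce the theorem to the single inequality $\sum_{i\ge 2}\varepsilon_i\ge\tfrac13$; your counterexample on the Petersen graph correctly explains why this fails at $k=3$. The gap is in the second half. You reduce $\sum_{i\ge 2}\varepsilon_i\ge\tfrac13$ to an unproved structural claim --- that $m_k(G)$ is attained by a tuple with no edge of depth $\ge k-1$ --- and then sketch, but do not carry out, a local-exchange argument to obtain such a tuple, explicitly flagging the existence of the required alternating cycle as ``the delicate part.'' The closing feasibility remark does not close this gap: the linear constraints on $(\varepsilon_i)$ being compatible says nothing about whether an actual optimal $k$-tuple of perfect matchings with edge-depth at most $k-2$ exists. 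As stated, the key lemma is asserted, not proved.

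The detour is also unnecessary, and this is where your route genuinely diverges from the paper's. The paper proves $\sum_{i\ge 2}\varepsilon_i\ge\tfrac13$ directly, for \emph{every} $k$-tuple of perfect matchings with $k\ge4$ (no optimality, no depth restriction), by classifying vertices according to the multiset of depths $(x,y,z)$ of their three incident edges. Since each perfect matching covers exactly one edge at each vertex, $x+y+z=k\ge 4$, so these three depths cannot all be $\le 1$: \emph{every vertex is incident to at least one edge of depth $\ge 2$}. Counting edge--vertex incidences then gives
$\sum_{i\ge 2}\varepsilon_i \ \ge\ \frac{n/2}{m} \ =\ \frac{1}{3}.$
(Concretely, the paper sets up the six possible type-fractions $a,\dots,f$, forms the two identities $\mathbf{(1)}$ and $\mathbf{(2)}$ from the total vertex count and the covered-edge count, and observes that $\mathbf{(2)}-\tfrac13\mathbf{(1)}$ dominates the exact expression $\tfrac b3+\tfrac{2d}{3}+\tfrac e3$ for $\varepsilon_1$; the fact that $k\ge 4$ enters in ruling out the type $(1,1,1)$, which would put three depth-one edges on a single vertex.) Replacing your step (2) with this observation would complete the proof and is strictly more elementary than what you attempted.
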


\begin{proof}
Let $M_1,\ldots,M_k$ be perfect matchings of $G$ covering a fraction
of $m_k(G)$ of the edges of $G$. We say that a vertex of $G$ has type
$(x,y,z)$ if the three edges incident to $v$ are covered $x$, $y$, and
$z$ times (respectively) by $M_1,\ldots,M_k$. Let $a,b,c,d,e,f$ be the
fractions of vertices of type $(0,0,k)$, $(0,1,k-1)$, $(0,\ge 2,\ge
2)$, $(1,1,k-2)$, $(1,\ge 2,\ge 2)$, and $(\ge 2,\ge 2,\ge 2)$,
respectively. Any vertex of $G$ is of one of these six types, so we
have $\mathbf{(1)}$ $a+b+c+d+e+f=1$. Let $n$ and $m$ denote the number of vertices and edges of $G$, respectively. Then by definition of $m_k(G)$, the number of edges covered by $M_1,\ldots,M_k$ is
$m \cdot m_k(G)=\tfrac12(an+2bn+2cn+3dn+3en+3fn)$. Since $G$ is cubic, $m=\tfrac32n$ and
so, $\mathbf{(2)}$ $\tfrac{a}3+\tfrac{2b}3+\tfrac{2c}3+d+e+f=m_k(G)$.
Similarly, the fraction of edges covered precisely once by
$M_1,\ldots,M_k$ is equal to $\tfrac{b}3+\tfrac{2d}3+\tfrac{e}3$, so
we can remove one of the perfect matchings, say $M_1$, in such way
that $M_2,\ldots,M_k$ cover a fraction of at least
$m_k(G)-\tfrac1k\,(\tfrac{b}3+\tfrac{2d}3+\tfrac{e}3)$ of the edges of
$G$. Note that the combination $\mathbf{(2)}-\tfrac1{3} \,\mathbf{(1)}$ gives
$m_k(G)-\tfrac1{3}=\tfrac{b}3+\tfrac{c}{3}+\tfrac{2d}{3}+\tfrac{2e}{3}+\tfrac{2f}{3} \geq \tfrac{b}3+\tfrac{2d}3+\tfrac{e}3$, 
so it follows that $m_{k-1}(G)\ge \tfrac{k-1}k \,m_k(G)+\tfrac1{3k}$.
\end{proof}

In particular, this shows that any cubic bridgeless graph $G$ whose
edge-set can be covered by 4 perfect matchings satisfies $m_3(G)\ge
\tfrac{5}{6}$. It follows that the conjecture stating that every cubic
bridgeless graph $G$ satisfies $m_3(G)\ge \tfrac45$ only needs to be
verified for graphs whose edge-set cannot be covered by 4 perfect
matchings (some results on this class of graphs can be found
in~\cite{EspMaz} and~\cite{Hag}).

\smallskip

Recall that Kaiser, Kr\'al', and Norine~\cite{KaiKraNor} proved that every cubic
bridgeless graph $G$ satisfies $m_2(G)\ge \tfrac35$. An interesting
problem is to characterize graphs for which equality holds (the
Petersen graph is an example). The next theorem implies that, again,
these graphs are such that their edge-set cannot be covered by 4 perfect
matchings (indeed, if the edge-set of a cubic bridgeless graph $G$ can be covered by 4 perfect matchings, then $m_4(G)=1$, and
Theorem~\ref{th:m2m4} below implies that $m_2(G)\ge \tfrac{11}{18}>
\tfrac35$).

\begin{theorem}\label{th:m2m4}
If $G$ is a cubic bridgeless graph, then
$m_{2}(G)\ge \tfrac5{12} \,m_4(G)+\tfrac7{36}$.
\end{theorem}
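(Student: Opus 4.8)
The plan is to classify the vertices by the depths of their incident edges exactly as in the proof of Theorem~\ref{th:mk}, and then to average, over the $\binom{4}{2}=6$ pairs of matchings, the number of edges they cover. So I would fix four perfect matchings $M_1,\dots,M_4$ covering a fraction $m_4(G)$ of $E(G)$, set $\mathcal M=\{M_1,\dots,M_4\}$, and recall that the three edges incident to a vertex have depths in $\mathcal M$ summing to $4$; hence every vertex has one of the four types $(0,0,4)$, $(0,1,3)$, $(0,2,2)$, $(1,1,2)$. Writing $a,b,c,d$ for their respective fractions, we get $a+b+c+d=1$.

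Then I would express the relevant quantities in terms of $a,b,c,d$. Counting incidences (vertex, incident edge) by edge-depth expresses the fractions $\varepsilon_0,\dots,\varepsilon_4$ of edges of each depth as linear forms in $a,b,c,d$ (for instance $\varepsilon_4=\tfrac a3$, $\varepsilon_3=\tfrac b3$, $\varepsilon_2=\tfrac{2c+d}3$), so that $m_4(G)=1-\varepsilon_0=\tfrac{a+2b+2c+3d}3$. Summing covered edges over the six pairs, I would use $\sum_{\{i,j\}}(|M_i|+|M_j|)=3\sum_i|M_i|=4|E(G)|$ (as $G$ is cubic) and $\sum_{\{i,j\}}|M_i\cap M_j|=\sum_e\binom{dp_e(\mathcal M)}2=(\varepsilon_2+3\varepsilon_3+6\varepsilon_4)|E(G)|$, so that the average pair covers $\tfrac16(4-\varepsilon_2-3\varepsilon_3-6\varepsilon_4)|E(G)|$, which simplifies (using $a+b+c+d=1$) to $\tfrac1{18}(6a+9b+10c+11d)|E(G)|$.

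Comparing this with $\bigl(\tfrac5{12}m_4(G)+\tfrac7{36}\bigr)|E(G)|=\tfrac1{36}(12a+17b+17c+22d)|E(G)|$, the difference works out to $\tfrac1{36}(b+3c)|E(G)|\ge 0$, so some pair covers at least $\bigl(\tfrac5{12}m_4(G)+\tfrac7{36}\bigr)|E(G)|$ edges, which gives the theorem. The main point to be careful about --- indeed the reason the argument must go through vertex types, not just edge-depths --- is that the averaging bound is \emph{false} from the constraints $\sum_i\varepsilon_i=1$ and $\sum_i i\,\varepsilon_i=\tfrac43$ alone: these admit $\varepsilon_1=\tfrac89$, $\varepsilon_4=\tfrac19$, for which the average (and in fact the best) pair covers only $\tfrac59<\tfrac{11}{18}$. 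It is precisely the extra relations imposed by the vertex types (equivalently $\varepsilon_3\le\varepsilon_1\le 2\varepsilon_2+\varepsilon_3$, which forbid a small positive depth around a vertex that also has a depth-$4$ edge) that rule this out; beyond keeping this bookkeeping straight I do not anticipate a real obstacle.
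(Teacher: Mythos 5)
Your proof is correct and follows essentially the same strategy as the paper: both classify vertices into the types $(0,0,4)$, $(0,1,3)$, $(0,2,2)$, $(1,1,2)$ with fractions $a,b,c,d$, and both average over the six pairs of matchings to show the best pair meets the bound. Your inclusion--exclusion bookkeeping is a transparent reformulation of the paper's computation of the loss $3\varepsilon_1+\varepsilon_2$ when removing a pair, and your closing remark correctly identifies why the vertex-type constraints (and not merely $\sum\varepsilon_i=1$, $\sum i\varepsilon_i=\tfrac43$) are essential.
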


\begin{proof}
Let $M_1,\ldots,M_4$ be perfect matchings of $G$ covering a fraction of
$m_4(G)$ of the edges of $G$. Let $a,b,c,d,e,f$ be the fractions of
vertices defined in the proof of Theorem~\ref{th:mk}, with
$k=4$. Observe that $e=f=0$, thus we have $\mathbf{(1)}$ $a+b+c+d=1$ and
$\mathbf{(2)}$ $\tfrac{a}3+\tfrac{2b}3+\tfrac{2c}3+d=m_4(G)$. Let $\varepsilon_i$
be the fraction of edges of $G$ covered precisely $i$ times by
$M_1,M_2,M_3,M_4$.

Note that after the removal of two perfect matchings $M_i$ and $M_j$
($i\ne j$) from $M_1,\ldots,M_4$, the edges that were covered only by
$M_i$, or only by $M_j$, or only by $M_i$ and $M_j$ are not covered
anymore. If we sum the fractions of edges of these three types, for any
of the six pair $\{i,j\}$ we obtain $3\varepsilon_1+\varepsilon_2$
(every edge covered exactly once is counted three times). Note that
$\varepsilon_1=\tfrac{b}{3}+\tfrac{2d}3$ and
$\varepsilon_2=\tfrac{2c}3+\tfrac{d}3$, so
$3\varepsilon_1+\varepsilon_2=b+\tfrac{2c}3+\tfrac{7d}3$. Thus we can choose a
pair $\{i,j\}$, such that after the removal of $M_i$ and $M_j$ from
$M_1,\ldots,M_4$, a fraction of at least $m_4(G)-\tfrac16(b+\tfrac{2c}3+\tfrac{7d}3)$
of the edges of $G$ is still covered. The combination $\tfrac72 \,\mathbf{(2)}-\tfrac7{6}
\,\mathbf{(1)}$ gives
$\tfrac72\,m_4(G)-\tfrac7{6}=\tfrac{7b}6+\tfrac{7c}6+\tfrac{7d}3 \geq b+\tfrac{2c}3+\tfrac{7d}3 $, so it follows
that $m_2(G)\ge \tfrac5{12} \,m_4(G)+\tfrac7{36}$.
\end{proof}

\section{Complexity}\label{sec:np}

A cubic bridgeless graph $G$ satisfies $m_3(G)=1$
if and only if it is 3-edge-colorable, and deciding this is a
well-known NP-complete problem (see \cite{Hol}). It
was proved in~\cite{EspMaz} that deciding whether a cubic bridgeless graph $G$ satisfies 
$m_4(G)=1$ is also an NP-complete problem.

In this section, we prove that determining whether $m_2(G)$, $m_3(G)$,
and $m_4(G)$ are more than any given constant (lying in some
appropriate interval) is also an NP-complete problem. In the case of
$m_3(G)$ (see Theorem~\ref{th:m3}), this solves a conjecture of Tang,
Zhang, and Zhu~\cite{TanZhaZhu}. 


\begin{theorem}\label{th:m2}
For any constant $ \frac{3}{5} < \tau <\frac23$, deciding whether a
cubic bridgeless graph $G$ satisfies $m_2(G)> \tau$ (resp. $m_2(G)\ge
\tau$) is an NP-complete problem.
\end{theorem}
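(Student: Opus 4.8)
The plan is to prove this by a reduction from a known NP-complete problem, most naturally the problem of deciding whether a cubic bridgeless graph $G$ has a $2$-factor-free structure — or more precisely, since Kaiser, Kr\'al', and Norine showed $m_2(G)\ge\tfrac35$ with equality for the Petersen graph, the natural source is $3$-edge-colorability (equivalently $m_2(G)=1$), or else some intermediate counting problem. First I would observe that $m_2(G)$ measures the maximum fraction of edges covered by two perfect matchings; since two perfect matchings cover $\tfrac23|E(G)|$ edges exactly when they are disjoint, the value $m_2(G)$ is tightly linked to how ``close to disjoint'' two perfect matchings can be, and the Petersen graph is the unique obstruction pushing the value down to $\tfrac35$. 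The reduction should take an instance of an NP-complete problem and, via repeated glueing of copies of the Petersen graph on selected edges of a gadget construction, amplify the gap so that YES instances satisfy $m_2(G)>\tau$ and NO instances satisfy $m_2(G)\le\tfrac35<\tau$ (or the reverse for the strict/non-strict variants).

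The key steps, in order: (i) fix an NP-complete base problem about cubic bridgeless graphs — I expect the cleanest choice is deciding $3$-edge-colorability, for which the extremal behavior is that colorable graphs have $m_2=1$; (ii) design a composite graph $G_H$ built from the input $H$ by glueing many copies of the Petersen graph onto each edge, and use Lemma~\ref{lem:petersen} (every two distinct perfect matchings of $P$ meet in exactly one edge, covering $9$ of the $15$ edges) to compute precisely how the covering fraction of $G_H$ decomposes into contributions from the copies of $P$ and from the ``core'' corresponding to $H$; (iii) show via the restriction argument (exactly as in the proof of Theorem~\ref{the:m3_FR}) that the global fraction $m_2(G_H)$ is maximized by choosing, in each Petersen copy, perfect matchings that either coincide (covering $5$ edges of that copy, i.e.\ the matching itself, fraction $\tfrac13$) or are distinct (covering $9$ edges, fraction $\tfrac35$), depending on the restriction to the core; (iv) tune the number of glued copies so that the arithmetic forces $m_2(G_H)$ to land on opposite sides of the target $\tau$ according to whether $H$ is a YES or NO instance; (v) handle both the strict ($m_2(G)>\tau$) and non-strict ($m_2(G)\ge\tau$) versions by choosing the number of copies large enough that the achievable values of $m_2(G_H)$ form a discrete set avoiding $\tau$, and note membership in NP is immediate since a certificate is just the two perfect matchings.

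The main obstacle I anticipate is step (iv): controlling $m_2(G_H)$ exactly, rather than just up to lower bounds, requires arguing that the adversary choosing the two perfect matchings cannot do better than the ``obvious'' strategy dictated by the restriction to $H$ — in particular one must rule out clever trade-offs where sacrificing coverage in one Petersen copy buys coverage elsewhere. The resolution is the same trick used in Theorem~\ref{the:m3_FR}: because there are quadratically many Petersen copies but only linearly many core edges, any deviation from the optimal per-copy behavior on even a single copy costs a $\Theta(|E(H)|)$-sized penalty that dominates any possible gain in the core, forcing the extremal configuration to be exactly the product of an optimal core choice with the locally optimal Petersen matchings. Once this ``separation of scales'' estimate is in place, the remaining computation of $m_2(G_H)$ as an explicit affine function of the core covering fraction is routine, and matching it to the interval $\tfrac35<\tau<\tfrac23$ determines the required number of glued copies. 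I would also need to double-check the edge-count bookkeeping (the number of edges of $G_H$ is a quadratic polynomial in $|E(H)|$, analogous to the $15|E(G)|^2+|E(G)|$ appearing earlier), but this is mechanical.
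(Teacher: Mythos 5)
Your overall architecture matches the paper's: reduce from $3$-edge-colorability, glue gadgets onto every edge of the input graph so that perfect-matching restrictions decouple across the $2$-edge-cuts, and tune gadget multiplicities to place the achievable coverage ratios on the two sides of $\tau$. But there is a genuine gap in steps (iii)--(iv), and the ``separation of scales'' argument you invoke to close it does not apply here.

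The problem is that glueing Petersen copies alone produces \emph{no per-copy penalty} when the restriction to the core double-covers an edge $e$. If $M_1'$ and $M_2'$ both contain the clone edges of $e$, their restrictions to a Petersen copy glued on $e$ are two perfect matchings of $P$ through the glued edge $uv$; since $uv$ lies in exactly two perfect matchings of $P$, the adversary may take them to be those two \emph{distinct} matchings, which (by Lemma~\ref{lem:petersen}) still cover $9$ of the $15$ edges --- the same maximum as in every other Petersen copy. So the only loss from a double-covered core edge is the single edge lost in the core itself, and the gap between YES and NO instances is merely $\frac{1}{m(15b+1)}$ where $m=|E(G)|$ and $b$ is the number of Petersen copies per edge. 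This is qualitatively different from the situation in Theorem~\ref{the:m3_FR}: there a triple-covered core edge forces each glued Petersen copy to drop from $12$ to $9$ covered edges, giving a $\Theta(m)$ penalty. For $m_2$ no such drop is available, your claimed $\Theta(|E(H)|)$ penalty does not exist, and the arithmetic in step (iv) cannot be made to work: the achievable ratios $\tfrac{9b+2/3}{15b+1}$ step by $\Theta(1/b^2)$ while the YES/NO gap shrinks like $\Theta(1/(mb))$, so for a generic fixed $\tau$ and growing $m$ no integer $b$ lands $\tau$ between them.

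The missing idea is the second gadget the paper uses: copies of $K_4$. In $K_4$ each edge lies in a \emph{unique} perfect matching, so if two perfect matchings of $G'$ both use the clone edges of $e$, their restrictions to every $K_4$ copy glued on $e$ must coincide and cover only $2$ of the $6$ edges instead of $4$. Glueing $a = \lfloor m/2\rfloor+1$ copies of $K_4$ on every edge then yields a guaranteed penalty of $2a\ge m$ edges for a non-$3$-edge-colorable input, which dominates the granularity, while the $b$ Petersen copies shift the baseline ratio down from $\tfrac23$ toward $\tfrac35$ so that the target $\tau$ can be hit for any $\tau$ in $(\tfrac35,\tfrac23)$. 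Without the $K_4$ gadget (or some other gadget with the ``unique perfect matching through each edge'' property), the reduction cannot separate the two cases.
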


\begin{proof}
The proof proceeds by reduction from the 3-edge-colorability of cubic
bridgeless graphs, which is a well-known NP-complete
problem~\cite{Hol}. Note that our problem is clearly in NP, since any set of two
perfect matchings whose union covers a fraction of more than (resp. at
least) $\tau$ of
the edges is a certificate that can be checked in polynomial time.

For any cubic bridgeless graph $G$ with $m$ edges,
we construct (in polynomial time) a cubic bridgeless graph $G'$ (of size
polynomial in $m$), such that $G$ is 3-edge-colorable if and only if
$m_2(G')> \tau$ (resp. $m_2(G')\ge \tau$).

Let $a=\lfloor \tfrac{m}2\rfloor+1$ and
$b=\lfloor a\cdot \tfrac{4-6\tau}{15\tau-9}\rfloor$. It
can be checked that $$\tau < \tfrac{4a+9b+2/3}{6a+15b+1}<
\tau+\tfrac{2a}{m(6a+15b+1)}.$$

The graph
$G'$ is obtained from $G$ by glueing $a$ copies of $K_4$
and $b$ copies of the Petersen graph on every edge of $G$. Note that
$G'$ has precisely $m(6a+15b+1)$ edges.

Assume first that $G$ is 3-edge-colorable. Then $G$ has two perfect
matchings $M_1$ and $M_2$ such that $M_1\cap M_2=\emptyset$. We construct two perfect matchings $M_1'$ and $M_2'$ of $G'$ by combining
$M_1$ and $M_2$, respectively, with perfect matchings of each copy of
$K_4$ and the Petersen graph glued on the edges of $G$. Since no edge
of $G$ is contained both in $M_1$ and $M_2$, these two perfect matchings can be
combined with perfect matchings covering precisely 4 edges of each
copy of $K_4$ (see Figure~\ref{fig:m2}, left and center) and 9 edges
of each copy of the Petersen
graph (by Lemma~\ref{lem:petersen}). It follows that $G'$ contains two
perfect matchings $M_1'$ and $M_2'$ covering $m(4a+9b)+\tfrac23\,m$
edges of $G'$. Therefore, we have $m_2(G')\ge \tfrac{4a+9b+2/3}{6a+15b+1} >
\tau$, as desired.

\begin{figure}[ht]
\centering
\includegraphics[width=12cm]{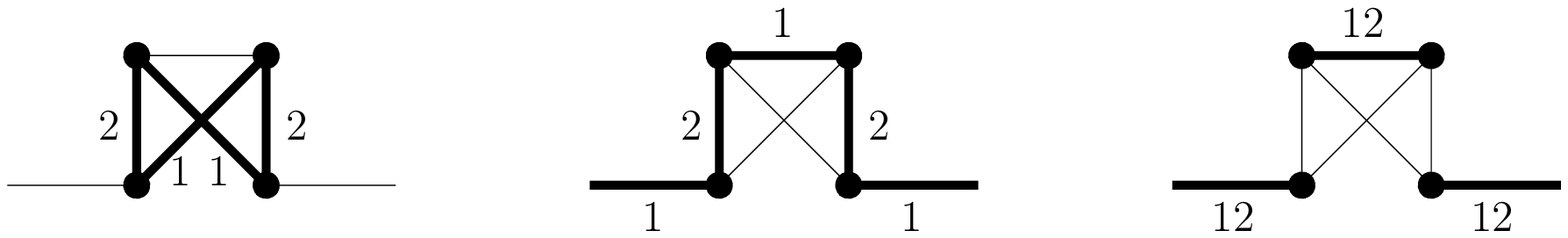}
\caption{Perfect matchings covering a copy of $K_4$ glued on some edge
of $G$. Edges are labelled $i$ if they are covered by $M_i'$ in $G'$.}\label{fig:m2}
\end{figure}

Assume now that $G$ is not 3-edge-colorable, and take any two perfect
matchings $M_1'$ and $M_2'$ of $G'$. Let $M_1$ and $M_2$ be the
restriction of $M_1'$ and $M_2'$ to $G$, respectively. Since $G$ is
not 3-edge-colorable, $M_1$ and $M_2$ have non-empty intersection, so
$G$ has an edge $e$ lying both in $M_1$ and $M_2$. We consider the
union of the restriction of $M_1'$ and the restriction of $M_2'$ to
each copy of $K_4$ or the Petersen graph glued on the edges of $G$. As
before, this union covers at most 4 edges in each copy of $K_4$, and
at most 9 edges in each copy of the Petersen graph. However, it can be
checked that since $e$ is contained in $M_1\cap M_2$, at most 2 edges
can be covered in each copy of $K_4$ glued on $e$ (see
Figure~\ref{fig:m2}, right). It follows that $M_1' \cup M_2'$ covers
at most $m(4a+9b+\tfrac23)-2a$ edges of $G'$. Therefore, $m_2(G')\le
\tfrac{4a+9b+2/3}{6a+15b+1}-\tfrac{2a}{m(6a+15b+1)} < \tau$, as
desired.
\end{proof}

\begin{theorem}\label{th:m3}
For any constant $ \frac{4}{5} < \tau <1$, deciding whether a cubic
bridgeless graph $G$ satisfies $m_3(G)> \tau$ (resp. $m_3(G)\ge \tau$)
is an NP-complete problem.
\end{theorem}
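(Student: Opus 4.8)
The plan is to follow the proof of Theorem~\ref{th:m2} almost verbatim, again reducing from the 3-edge-colorability of cubic bridgeless graphs. That the problem lies in NP is immediate, since a triple of perfect matchings whose union covers more than (resp.\ at least) a fraction $\tau$ of the edges is a certificate checkable in polynomial time. For the hardness, starting from a cubic bridgeless graph $G$ with $m$ edges, I would build a cubic bridgeless graph $G'$ by glueing $a$ copies of $K_4$ and $b$ copies of the Petersen graph on every edge of $G$, where $a=\lfloor \beta m\rfloor+1$ for a suitable constant $\beta=\beta(\tau)$ and $b=\lfloor a\cdot\tfrac{6(1-\tau)}{15\tau-12}\rfloor$; here $15\tau-12>0$ exactly because $\tau>\tfrac45$. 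As in Theorem~\ref{th:m2}, the point of this choice is that
\[
\tau < \frac{6a+12b+1}{6a+15b+1} < \tau+\frac{4a+1}{m\,(6a+15b+1)},
\]
the middle quantity taking values densely in $(\tfrac45,1)$; the graph $G'$ is cubic bridgeless, has exactly $m(6a+15b+1)$ edges, and is constructed in polynomial time, and the target fraction $\tfrac{6a+12b+1}{6a+15b+1}$ will serve to separate the two cases: $G$ is 3-edge-colorable if and only if $m_3(G')>\tau$ (resp.\ $m_3(G')\ge\tau$).

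The heart of the argument is a local analysis, edge by edge of $G$, of an arbitrary triple $M_1',M_2',M_3'$ of perfect matchings of $G'$. Fix $e\in E(G)$ and consider the chain of $a+b$ gadgets glued on $e$. Since the two clone edges surrounding any gadget form a $2$-edge-cut of $G'$, each $M_i'$ contains either all or none of the ``link'' edges of this chain, and this records exactly whether the restriction of $M_i'$ to $G$ contains $e$. Let $d_e\in\{0,1,2,3\}$ be the number of the $M_i'$ whose restriction to $G$ contains $e$. Using that $K_4$ has three perfect matchings partitioning its edge set (so exactly one through any prescribed edge) and Lemma~\ref{lem:petersen} (exactly two perfect matchings of the Petersen graph pass through any prescribed edge, any two distinct ones cover $9$ edges, any three distinct ones cover $12$), one checks that $M_1'\cup M_2'\cup M_3'$ covers at most $6a+12b+1$ edges inside the chain of $e$ when $d_e=1$ (and this is attainable), at most $4a+12b$ when $d_e=0$, at most $4a+12b+1$ when $d_e=2$, and at most $2a+9b+1$ when $d_e=3$. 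The crucial feature is that a $K_4$-gadget is ``free'' precisely when $d_e=1$ (all six of its attached edges are then coverable), and costs strictly more for every other value of $d_e$; the Petersen gadgets serve only to pull the global ratio of the construction down toward $\tfrac45$.

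The two directions now follow. If $G$ is 3-edge-colorable, choose three perfect matchings partitioning $E(G)$; then $d_e=1$ for every edge $e$, so by the tight case of the estimates above we obtain three perfect matchings of $G'$ covering $m(6a+12b+1)$ edges, whence $m_3(G')\ge\tfrac{6a+12b+1}{6a+15b+1}>\tau$. Conversely, if $G$ is not 3-edge-colorable, take any three perfect matchings $M_1',M_2',M_3'$ of $G'$; their restrictions to $G$ are perfect matchings not forming a proper $3$-edge-coloring, so $\sum_{e\in E(G)}d_e=m$ while not every $d_e$ equals $1$, forcing an edge $e_0$ with $d_{e_0}=0$ and a distinct edge $e_1$ with $d_{e_1}\ge 2$. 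Summing the per-chain bounds --- at most $6a+12b+1$ on each chain, but at most $6a+12b+1-(2a+1)$ on the chain of $e_0$ and (since the loss for $d_e\ge2$ is minimized at $d_e=2$) at most $6a+12b+1-2a$ on the chain of $e_1$ --- yields a total of at most $m(6a+12b+1)-(4a+1)$ covered edges, so $m_3(G')\le\tfrac{6a+12b+1}{6a+15b+1}-\tfrac{4a+1}{m(6a+15b+1)}<\tau$. Both the strict and the non-strict statements follow.

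I expect the main obstacle to be arithmetic rather than combinatorial: choosing $\beta$ (hence $a$, a linear function of $m$) so that the two-sided inequality above can be satisfied for \emph{every} $\tau\in(\tfrac45,1)$. This amounts to making the spacing between consecutive attainable values of $\tfrac{6a+12b+1}{6a+15b+1}$ (which, a short computation shows, is about $\tfrac{18a+3}{(6a+15b+1)^2}$) smaller than the window width $\tfrac{4a+1}{m(6a+15b+1)}$; this holds as soon as $a>\tfrac{(15\tau-12)m}{4}$, and since $15\tau-12<3$ on the relevant interval, any $a>\tfrac34 m$ works, e.g.\ $\beta=1$. Everything else --- in particular the four per-chain estimates --- is a routine case analysis of the perfect matchings of $K_4$ and of the Petersen graph, entirely parallel to the $K_4$ computation carried out in the proof of Theorem~\ref{th:m2}.
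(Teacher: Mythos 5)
Your proof is correct and follows essentially the same route as the paper's: reduce from $3$-edge-colorability, glue $a$ copies of $K_4$ and $b$ copies of the Petersen graph on each edge of $G$, and compare the coverage attainable when the restrictions form a proper $3$-edge-coloring against the coverage forced when they do not. Your parameter $b=\lfloor a\cdot\tfrac{6(1-\tau)}{15\tau-12}\rfloor$ is literally the paper's $b=\lfloor a\cdot\tfrac{6-6\tau}{15\tau-12}\rfloor$, and all four of your per-chain bounds ($6a+12b+1$ for $d_e=1$, $4a+12b$ for $d_e=0$, $4a+12b+1$ for $d_e=2$, $2a+9b+1$ for $d_e=3$) check out.

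The one genuine (and mild) departure is in the loss accounting for the backward direction. The paper locates a \emph{single} edge $e$ with $d_e\ge 2$ and charges a loss of at least $2$ per $K_4$ on that chain, giving the window $\tau<\tfrac{6a+12b+1}{6a+15b+1}<\tau+\tfrac{2a}{m(6a+15b+1)}$ with the correspondingly larger $a=\lfloor\tfrac{3m}{2}\rfloor+1$. You additionally observe that $\sum_e d_e=m$ forces an edge with $d_e=0$ as well, so you collect a loss on two distinct chains ($2a+1$ and $2a$), widening the window to $\tfrac{4a+1}{m(6a+15b+1)}$ and letting you take $a=m+1$. Both choices are $\Theta(m)$ and polynomial-time constructible, so the refinement does not change the complexity-theoretic content; it merely trades a slightly longer combinatorial argument for a smaller gadget count. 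Either version is a valid proof of the theorem.
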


\begin{proof}
The proof follows the lines of the proof of Theorem~\ref{th:m2}. Here
we define $a=\lfloor \tfrac{3m}2\rfloor+1$, where $m$ is the number of edges of $G$, and $b=\lfloor a\cdot
\tfrac{6-6\tau}{15\tau-12}\rfloor$. It can be checked
that $$\tau < \tfrac{6a+12b+1}{6a+15b+1}<
\tau+\tfrac{2a}{m(6a+15b+1)}.$$ The graph $G$ is 3-edge-colorable if
and only if it contains 3 perfect matchings with pairwise empty
intersection. It follows that as above, if $G$ is 3-edge-colorable we
can find 3 perfect matchings of $G'$ such that their restrictions
cover 6 edges in each copy of $K_4$ (see Figure~\ref{fig:m3}, left)
and $12$ edges in each copy of the Petersen graph (by
Lemma~\ref{lem:petersen}) glued on the edges of $G$. 

\begin{figure}[ht]
\centering
\includegraphics[width=8cm]{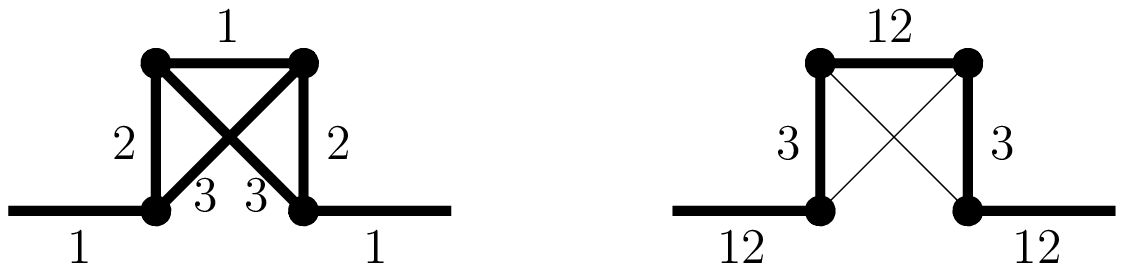}
\caption{Perfect matchings covering a copy of $K_4$ glued on some edge
of $G$. Edges are labelled $i$ if they are covered by $M_i'$ in $G'$.}\label{fig:m3}
\end{figure}

If $G$ is not
3-edge-colorable, then for any 3 perfect matchings of $G'$ there is an
edge of $G$ covered at least twice by their restrictions to $G$. This
implies that at most 4 edges are covered in each copy of $K_4$
glued on this edge (see Figure~\ref{fig:m3}, right) and the result
follows.
\end{proof}

\begin{theorem}\label{th:m4}
For any constant $ \frac{14}{15} < \tau <1$, deciding whether a cubic
bridgeless graph $G$ satisfies $m_4(G)> \tau$ (resp. $m_4(G)\ge \tau$)
is an NP-complete problem.
\end{theorem}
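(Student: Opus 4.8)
The plan is to follow exactly the same reduction scheme as in Theorems~\ref{th:m2} and~\ref{th:m3}, reducing again from the 3-edge-colorability of cubic bridgeless graphs. Membership in NP is immediate: a set of four perfect matchings whose union covers more than (resp. at least) $\tau |E(G)|$ edges is a polynomial-time checkable certificate. Given a cubic bridgeless graph $G$ with $m$ edges, I would set $a=\lfloor \tfrac{3m}{2}\rfloor+1$ (or a similar linear-in-$m$ quantity, chosen so that the additive slack $\tfrac{2a}{m(6a+15b+1)}$ exceeds $\tfrac1m$, which is what lets a single ``bad'' edge of $G$ tip the fraction strictly below $\tau$), and $b=\lfloor a\cdot \tfrac{c_1-c_1\tau}{c_2\tau-c_3}\rfloor$ for the appropriate constants making the target fraction land in the interval; here the relevant per-copy covering numbers are $14$ for the Petersen graph (Lemma~\ref{lem:petersen}, four distinct perfect matchings cover $14$ of its $15$ edges) and, for $K_4$, the maximum number of edges of a copy covered by the restrictions of four perfect matchings of $G'$. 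One then forms $G'$ by glueing $a$ copies of $K_4$ and $b$ copies of the Petersen graph on every edge of $G$, so $|E(G')|=m(6a+15b+1)$, and verifies the sandwich inequality
\begin{equation*}
\tau < \frac{6a+14b+2/3}{6a+15b+1} < \tau + \frac{2a}{m(6a+15b+1)},
\end{equation*}
using that $\tfrac{14}{15}<\tau<1$ so that $b$ is a well-defined nonnegative integer (this is the analogue of the checked inequalities in the previous proofs, and is a routine calculation I would not expand).

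The forward direction runs as before: if $G$ is $3$-edge-colorable then it has a proper $3$-edge-colouring, and choosing four perfect matchings of $G$ among the three colour classes (repeating one class) — say $M_1=M_2$ equal to one colour class and $M_3,M_4$ the other two — gives a family $\mathcal M$ in which every edge of $G$ has depth at most $2$. Each of these four matchings extends through every glued copy, and because no edge of $G$ is covered three or more times by $\mathcal M$, one can choose the extensions inside each $K_4$ so that the four restrictions cover $6$ of its $6$ edges (a $K_4$ has exactly $3$ perfect matchings, each of size $2$, and together they cover all $6$ edges; the picture analogous to Figure~\ref{fig:m3} left shows this is compatible with the forced behaviour at the clone edges), and inside each Petersen copy so that the four restrictions cover $14$ edges (Lemma~\ref{lem:petersen}). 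The resulting four perfect matchings of $G'$ cover $m(6a+14b)+\tfrac23 m$ edges — the $\tfrac23 m$ counting the non-clone original edges of $G$, of which a $\tfrac23$ fraction is covered since depth is $\le 2$ and every matching takes $\tfrac13$ of them — so $m_4(G')\ge \tfrac{6a+14b+2/3}{6a+15b+1}>\tau$.

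The converse is where the one genuine combinatorial point lies. If $G$ is not $3$-edge-colorable, then for any four perfect matchings $M_1',\dots,M_4'$ of $G'$ the restrictions $M_1,\dots,M_4$ to $G$ cannot have edge-depth $\le 2$ everywhere — otherwise $M_1\cup\cdots\cup M_4$ together with the parity/Vizing-type argument would force a proper $3$-edge-colouring of $G$ (four perfect matchings of a cubic graph with all depths at most $2$ and covering everything give, by a standard argument, a $3$-edge-colouring; more directly, if every edge of $G$ were covered at least once and at most twice, then the multiset $M_1+M_2+M_3+M_4$ is a proper $4:2$-edge-colouring, which for cubic graphs collapses to $3$-edge-colorability). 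Hence some edge $e$ of $G$ has depth $\ge 3$ in $\{M_1,\dots,M_4\}$, i.e. three of the four matchings contain $e$, hence three of the four clone edges of $e$ are forced in each $K_4$ glued on $e$ — and I claim that then the four restrictions can cover at most $5$ of the $6$ edges of that $K_4$ (the point being that with three of the four matchings already using a given clone edge, the four matchings of this $K_4$ are not all distinct enough to exhaust all three perfect matchings; a short case check, analogous to Figure~\ref{fig:m3} right, pins this down — the worst alternative ``depth $4$ at $e$'' forces all four clone edges equal and is even more restrictive). I would then conclude that $M_1'\cup\cdots\cup M_4'$ covers at most $m(6a+14b+\tfrac23)-a$ edges of $G'$, giving
\begin{equation*}
m_4(G') \le \frac{6a+14b+2/3}{6a+15b+1} - \frac{a}{m(6a+15b+1)} < \tau .
\end{equation*}
The main obstacle is precisely this last claim: establishing the exact maximum number of edges of a single glued $K_4$ covered by the restrictions of four perfect matchings, both in the ``good'' case (no forced triple, maximum $6$) and in the ``bad'' case ($e$ of depth $\ge 3$, maximum $5$), together with the auxiliary fact that edge-depth $\le 2$ everywhere on $G$ forces $3$-edge-colorability. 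Everything else — the choice of $a$ and $b$, the sandwich inequality, and the bookkeeping of $|E(G')|$ — is a direct transcription of the earlier proofs. Once the $K_4$ analysis is in hand, the figure (an analogue of Figure~\ref{fig:m3}) and the numeric estimates complete the argument.
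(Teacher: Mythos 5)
Your reduction from $3$-edge-colorability does not work for $m_4$, and the gap is exactly at the point you flag as the ``one genuine combinatorial point.'' You need the implication: \emph{if $G$ is not $3$-edge-colorable, then for any four perfect matchings of $G'$ the restrictions $M_1,\dots,M_4$ to $G$ have some edge of depth $\ge 3$} (so that the $K_4$-gadgets on that edge lose coverage). This is false. Conjecture~\ref{4PMs} asserts that \emph{every} cubic bridgeless graph has four perfect matchings with edge-depth $\le 2$; and quite apart from that conjecture, any snark $G$ whose edge-set can be covered by four perfect matchings (these exist --- \cite{EspMaz} is precisely about the uncommon class for which this fails, and e.g.\ the Blanu\v{s}a snarks do admit such a cover) already has four perfect matchings covering every edge, and one checks immediately that if four perfect matchings of a cubic graph cover every edge then all depths are automatically $\le 2$ (an edge of depth $\ge 3$ would leave one of its sibling edges at an endpoint uncovered). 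For such a non-$3$-edge-colorable $G$, your construction yields $m_4(G')\ge \tfrac{6a+14b+1}{6a+15b+1}>\tau$, a false positive, so the reduction does not decide $3$-edge-colorability. The auxiliary ``$4{:}2$-edge-colouring'' argument you invoke is also inverted: a cubic graph has fractional chromatic index at least $3$, so no cubic graph has a $4{:}2$-edge-colouring; what this actually shows is that the four matchings cannot all have depth exactly $2$, not that depth $\le 2$ everywhere forces $3$-edge-colorability.

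The paper sidesteps this entirely by reducing from a different NP-complete problem: deciding whether $m_4(G)=1$, i.e.\ whether $E(G)$ can be covered by four perfect matchings (NP-complete by \cite{EspMaz}). Then the converse direction is immediate: if $m_4(G)<1$, any four perfect matchings of $G'$ restrict to four perfect matchings of $G$ leaving some edge $e$ uncovered, and the $K_4$-gadgets glued on $e$ then cover at most $4$ of their $6$ edges (not $5$ as you suggest --- with no restriction containing the removed edge, only two of the three $K_4$-matchings are available), giving the required loss of $2a$. Your forward direction, sandwich-inequality bookkeeping, and choice of $a$ and $b$ are all essentially on the right track (the paper's numerator is $6a+14b+1$ rather than $6a+14b+\tfrac23$, since every edge of $G$ being covered makes all clone edges covered, but this only helps), and membership in NP is fine; it is the source problem of the reduction that must change.
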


\begin{proof}
Instead of reducing from 3-edge-colorability, we have to make a
small modification here. The proof proceeds by reduction from the
problem of deciding whether a cubic bridgeless graph $G$ satisfies
$m_4(G)=1$, which is an NP-complete problem~\cite{EspMaz}. We
define $a=\lfloor \tfrac{m}2\rfloor+1$, where $m$ is the number of
edges of $G$, and $b=\lfloor a\cdot 
\tfrac{6-6\tau}{15\tau-14}\rfloor$. It can be checked
that $$\tau < \tfrac{6a+14b+1}{6a+15b+1}<
\tau+\tfrac{2a}{m(6a+15b+1)}.$$ As above, given a cubic
bridgeless graph $G$, we construct $G'$ by glueing $a$ copies of $K_4$
and $b$ copies of the Petersen graph on every edge of $G$. 

If $m_4(G)=1$, then $G$ contains 4 perfect matchings such that any
edge of $G$ is covered by at least 1 of the 4 perfect matchings, and
avoided by at least 2 of the 4 perfect matchings. It follows that $G'$
contains 4 perfect matchings such that their restrictions cover 6
edges in each copy of $K_4$ (see Figure~\ref{fig:m4}, left and
center) and $14$ edges in each copy of the Petersen graph (by
Lemma~\ref{lem:petersen}) glued on the edges of $G$.

\begin{figure}[ht]
\centering
\includegraphics[width=12cm]{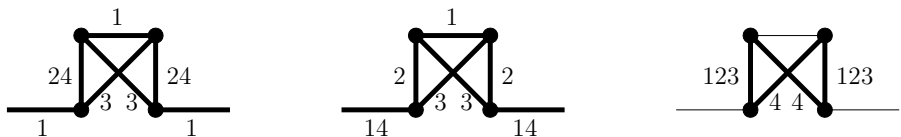}
\caption{Perfect matchings covering a copy of $K_4$ glued on some edge
of $G$. Edges are labelled $i$ if they are covered by $M_i'$ in $G'$.}\label{fig:m4}
\end{figure}

If $m_4(G)\ne 1$, then for any 4 perfect matchings of $G'$ there is an
edge of $G$ avoided by each of their restrictions to $G$. This implies
that at most 4 edges are covered in of each copy of $K_4$ glued on
this edge (see Figure~\ref{fig:m4}, right) and the result follows.
\end{proof}

Recall that the Berge-Fulkerson Conjecture is equivalent to $m_5=1$
(see~\cite{Maz}), so if this conjecture is true we cannot prove
hardness results similar to Theorems~\ref{th:m2}, \ref{th:m3},
and~\ref{th:m4}, for $m_k$ when $k\ge 5$. 

\section{Conclusion}

Let ${\cal F}_{3/5}$ be the family of cubic bridgeless graphs $G$ for
which $m_2(G)=\tfrac35$. A nice problem left open by the results in the previous section is the following.

\begin{problem}
What is the complexity of 
deciding whether a cubic bridgeless graph belongs to ${\cal
  F}_{3/5}$?
\end{problem}

By Theorem~\ref{th:m2m4}, the edge-set of a graph of ${\cal F}_{3/5}$
cannot be covered by 4 perfect matchings. Using arguments similar to
that of~\cite{KaiKraNor}, it can be proved that any graph $G\in {\cal
  F}_{3/5}$ has a set $\cal M$ of at least three perfect matchings,
such that for any $M\in \cal M$ there is a set ${\cal M}_M$ of at
least three perfect matchings of $G$ satisfying the following: for any $M'\in
{\cal M}_M$, $|M\cup M'|=\tfrac35 |E(G)|$. However, this necessary
condition is not sufficient: it is not difficult to show that it is
satisfied by the dodecahedron and by certain families of snarks.

An interesting
question is whether 
there exists any \emph{3-edge-connected} cubic bridgeless graph $G\in
{\cal F}_{3/5}$ that is different from the Petersen graph.

Similarly, we do not know if it can be decided in
polynomial time whether a cubic bridgeless graph
$G$ satisfies $m_3(G)=\tfrac45$ (resp. $m_4(G)=\tfrac{14}{15}$), but
the questions seem to be significantly harder than for $m_2$ (since
it is not known whether
$m_3=\tfrac45$ and $m_4=\tfrac{14}{15}$).

\end{document}